 \theoremstyle{plain}
 \newtheorem{tm}{Theorem}[section]
 \newtheorem{lm}[tm]{Lemma}
\newtheorem{coro}[tm]{Corollary}
 \newtheorem{pro}[tm]{Proposition}
 \theoremstyle{definition}
 \newtheorem{defi}[tm]{Definition}
 \newtheorem{rema}[tm]{Remark}
  \newtheorem{ex}[tm]{Example}
 \newtheorem*{th*}{Theorem}
\newcommand{\cl}[1]{\mathcal{#1}}
\newcommand{\Z}{\mathbb Z}
\newcommand{\N}{\mathbb N}
\newcommand{\R}{\mathbb R}
\newcommand{\F}{\mathbb F}
\newcommand{\Pp}{\mathbb P}
\newcommand{\D}{\Delta}
\newcommand{\cA}{\cl A}
\newcommand{\cC}{\cl C}
\newcommand{\cL}{\cl L}
\newcommand{\cO}{\cl O}
\newcommand{\p}{\partial}
\newcommand{\T}{{\mathbb T}}
\newcommand{\K}{{\mathbb K}}
\newcommand{\ov}{\overline}
\newcommand{\la}{\langle}
\newcommand{\ra}{\rangle}
\newcommand{\sig}{\sigma}
\newcommand{\Sig}{\Sigma}
\newcommand{\m}{\mathfra\K{m}}
\newcommand{\Supp}{\operatorname{Supp}}
\newcommand{\codim}{\operatorname{codim}}
\newcommand{\Pic}{\operatorname{Pic}}
\newcommand{\Hom}{\operatorname{Hom}}
\newcommand{\depth}{\operatorname{depth}}
\newcommand{\reg}{\operatorname{reg}}
\newcommand{\Cl}{\operatorname{Cl}}
 \newcommand{\Vol}{\operatorname{Vol}}
  \newcommand{\Ehr}{\operatorname{Ehr}}
    \newcommand{\Gal}{\operatorname{Gal}}
\newcommand{\dis}{\displaystyle}
\def\aa{{\bf \alpha}}
\def\bb{\beta}
\def\a{{\bf a}}
\def\t{{\bf t}}
\def\uu{{\bf u}}
\def\vv{{\bf v}}
\def\x{{\bf x}}
\def\m{{\bf m}}
\def\ev{{\text{ev}}}
\newcommand{\cK}{ \K^r}
\newcommand{\rs}[1]{Section~\ref{S:#1}}
\newcommand{\rp}[1]{Proposition~\ref{P:#1}}
\newcommand{\re}[1]{(\ref{e:#1})}
\newcommand{\rt}[1] {Theorem~\ref{T:#1}}
\newcommand{\rd}[1]{Definition~\ref{D:#1}}
\begin{document}
\begin{frontmatter}


\title{Multigraded Hilbert function and toric complete intersection codes}
 \thanks{The first author is supported by T\"{U}B\.{I}TAK-2219, the second author is partially supported by NSA Grant H98230-13-1-0279}
\author[Mesut Sahin]{Mesut \c{S}ahin}
\address[Mesut Sahin]{Department of Mathematics, Hacettepe University, Ankara, TURKEY}
\ead{mesut.sahin@hacettepe.edu.tr}
\author[Ivan Soprunov]{Ivan Soprunov}
\address[Ivan Soprunov]{Department of Mathematics, Cleveland State University, Cleveland, OH USA}
\ead{i.soprunov@csuohio.edu}


\begin{abstract} Let $X$ be a complete 
$n$-dimensional simplicial toric variety with homogeneous coordinate ring $S$.
We study the multigraded Hilbert function $H_Y$ 
of reduced $0$-dimensional subschemes $Y$ in $X$.
We provide explicit formulas and prove non-decreasing and stabilization
properties of $H_Y$ 
when $Y$  is a $0$-dimensional complete intersection in $X$.
We  apply our results to computing the dimension of some evaluation codes on $0$-dimensional complete intersection in simplicial toric varieties.

\end{abstract}

\begin{keyword}
 evaluation codes, toric varieties, multigraded Hilbert function
\end{keyword}

\end{frontmatter}

\section{Introduction}  

Let $X$ be a complete simplicial toric variety of dimension $n$ over an algebraically closed field $\K$, 
and $\dis S=\K[x_1,\dots, x_r]=\bigoplus_{\aa \in \cA} S_{\aa}$ 
be its homogeneous coordinate ring, multigraded by $\cA \cong \Cl(X)$, and let $B\subset S$ be its irrelevant ideal. Given a reduced closed subscheme 
$Y\subset X$ there is a unique radical $B$-saturated ideal $I(Y)\subset S$
which defines $Y$ (see \cite{Coxhom}). In this paper we study the Hilbert function 
$H_Y(\aa)=\dim_{\K}(S/I(Y))$ where $Y$ is a reduced $0$-dimensional subscheme of $X$.
Although the general situation was previously considered in fundamental work by Maclagan and Smith 
 \cite{MacS4, MacS5}, our goal is to better understand
 the multigraded regularity of $Y$ (the set of degrees $\aa$ where $H_Y(\aa)=|Y|$) and
 to provide an explicit combinatorial formula for $H_Y$ in terms of the polytopes $P_{\aa_i}$
 of the generators of $I(Y)$.
 
In particular, when $Y$ is a $0$-dimensional complete intersection lying in the dense orbit  $\T^n$ of $X$, 
our results imply the following (see \rp{HilbFla} and Corollary \ref{HF}).
\begin{th*} Let $Y\subseteq \T^n$ be a reduced complete intersection subscheme of $X$ such that $I(Y)$ is generated by $n$ homogeneous polynomials with semi-ample degrees $\aa_1,\dots,\aa_n$. 
Then for any $\aa\in\cA$ we have
$$
H_Y(\aa)=\sum_{s=0}^{n}\,\, (-1)^s\!\!\! \sum_{I\subseteq \{1,\dots,n\}, |I|=s} |P_{\aa-\aa_I} \cap M |,
$$
where $\aa_I=\sum_{i\in I}\aa_i$. Moreover,\\
(\textbf{i}) if $\aa\not\succeq \aa_i$ then $H_Y(\aa)=|P_{\aa}\cap M|$,\\
(\textbf{ii}) $H_Y(\aa)\leq H_Y(\aa'), \text{ for all }\aa \preceq \aa'$,\\ 
(\textbf{iii}) $H_Y(\aa)=|Y|$ for all  $\aa \succeq \aa_1+\cdots+\aa_n$.
\end{th*}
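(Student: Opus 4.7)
The plan is to derive the formula from a Koszul resolution and then read the three consequences off from it. Since $Y$ is a reduced $0$-dimensional complete intersection in the $n$-dimensional toric variety $X$, and since $I(Y)=(f_1,\ldots,f_n)$ is by hypothesis already $B$-saturated, a codimension argument forces $f_1,\ldots,f_n$ to be a regular sequence in the Cox ring $S$. Hence the Koszul complex
$$
0 \to S(-\aa_1-\cdots-\aa_n) \to \cdots \to \bigoplus_{|I|=s} S(-\aa_I) \to \cdots \to S \to S/I(Y) \to 0
$$
is an $\cA$-graded free resolution of $S/I(Y)$. Taking its multigraded Euler characteristic in degree $\aa$ and invoking the identification $\dim_\K S_\bb = |P_\bb\cap M|$ (extended by $0$ outside the effective cone) yields the displayed alternating sum.

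For (i), suppose $\aa\not\succeq\aa_i$ for each $i$. For any nonempty $I\ni i$, the class $\aa_I-\aa_i=\sum_{j\in I\setminus\{i\}}\aa_j$ is a sum of semi-ample, hence effective, classes. If $\aa-\aa_I$ were effective then so would be $\aa-\aa_i=(\aa-\aa_I)+(\aa_I-\aa_i)$, contradicting the hypothesis. Thus only the $s=0$ term of the Koszul sum survives. For (ii), since $Y\subset\T^n$ lies in the dense torus, no Cox coordinate $x_j$ vanishes on $Y$, so every monomial is a nonzerodivisor modulo the radical saturated ideal $I(Y)$. Given $\aa\preceq\aa'$, any monomial $x^\uu$ of degree $\aa'-\aa$ then induces an injection $(S/I(Y))_\aa\hookrightarrow(S/I(Y))_{\aa'}$, giving the desired inequality.

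For (iii), the evaluation map $S_\aa\to\K^{|Y|}$ has kernel $I(Y)_\aa$, producing the uniform upper bound $H_Y(\aa)\le|Y|$. To match this bound for $\aa\succeq\aa_1+\cdots+\aa_n$, I would use the Koszul formula once more: after substituting $I\mapsto\{1,\dots,n\}\setminus I$ and expressing each $|P_{\aa-\aa_I}\cap M|$ via Minkowski-additivity of lattice-point counts on the semi-ample cone, the alternating sum collapses to the mixed lattice-point count $n!\,\Vol(P_{\aa_1},\ldots,P_{\aa_n})$. By the Bernstein--Koushnirenko--Khovanskii theorem applied to the complete intersection $Y\subset\T^n$, this mixed volume equals $|Y|$, and together with the upper bound (and monotonicity from (ii)) this forces equality throughout the shifted cone.

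The principal obstacle is this last combinatorial identity: while the Koszul argument and the cone-theoretic arguments for (i)--(ii) are essentially formal, turning the Koszul alternating sum of lattice-point counts into a genuine mixed volume requires careful use of Minkowski-additivity for semi-ample classes in the Cox-ring grading and is where most of the work lies.
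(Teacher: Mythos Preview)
Your proposal is correct and matches the paper's approach: Koszul resolution for the formula, vanishing of $S_{\aa-\aa_I}$ for (\textbf{i}), monomial non-zerodivisors on $S/I(Y)$ for (\textbf{ii}), and the mixed-volume identity combined with monotonicity and the upper bound for (\textbf{iii}). For the combinatorial step you flag, the paper realizes the Koszul alternating sum (after the substitution you indicate, and assuming $\aa_{n+1}:=\aa-\sum\aa_i$ semi-ample) as the $n$-th discrete mixed derivative at the origin of the Ehrhart polynomial of $z_1P_{\aa_1}+\cdots+z_nP_{\aa_n}+P_{\aa_{n+1}}$, which agrees with the continuous mixed partial since the polynomial has degree $n$, and hence equals $n!\,V(P_{\aa_1},\ldots,P_{\aa_n})$; the identification $|Y|=n!\,V(P_{\aa_1},\ldots,P_{\aa_n})$ is made via toric intersection numbers $(E_1,\ldots,E_n)$ rather than by invoking BKK.
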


In the above theorem, we write $\aa \preceq \aa'$ if and only if $\aa'-\aa$ lies in $\N\beta$, where $\N\beta\subset \cA$ denotes the semigroup generated by the degrees of the variables $x_i$. 

As an application, we compute the dimension for a class of evaluation codes on $0$-dimensional
complete intersections in a toric variety (see \rt{CodeFla}), answering a question posed in \cite{sop}. The bound given in (\textbf{iii}) above is used to eliminate trivial codes. Our results generalize
the work of \cite{DRT} who used the classical Hilbert function to compute the dimension of 
evaluation codes on $0$-dimensional complete intersections in the projective space.

The paper is organized as follows. In \rs{prel} we collect all necessary preliminaries as well as give 
certain sufficient conditions for a $0$-dimensional subscheme of $X$ to be a complete intersection. 
\rs{mhf} contains the main results on the multigraded Hilbert function of $0$-dimensional subschemes of $X$. It builds on and generalizes the existing literature and improves the known bound on the multigraded regularity. 

This allows application of our results on the multigraded Hilbert function to the dimension calculation
for toric complete intersection codes, which we discuss in \rs{codes}.

%
%

\section{Preliminaries} \label{S:prel} In this section, we recall some standard definitions and results from toric geometry and fix some notation. For all unexplained concepts and for more details we refer the reader to the wonderful books \cite{F} by Fulton or \cite{CLSch} by Cox, Little and Schenck. Although these books study toric varieties over the complex numbers, the results we will be using carry easily to toric varieties over algebraically closed fields. Throughout the paper $\K$ is an arbitrary algebraically closed field and $\T=\K^*$
is its multiplicative group. Given a vector $\uu\in\Z^r$ we use $x^\uu$ to denote the Laurent monomial 
$x^\uu=x_1^{u_1}\dots x_r^{u_r}$. 
We also use $[m]$ to denote the set $\{1,\dots,m\}$ for any positive integer $m\geq 1$.

Let $M$ be a lattice of rank $n$ and $N$ be the dual lattice. By $M_{\R}$ (resp. $N_{\R}$) we denote the corresponding
real $n$-dimensional vector space $M\otimes\R$ (resp. $N\otimes{\R}$).
Let $\Sig \subseteq N_\R$ be a complete rational polyhedral fan and $X$ be the $n$-dimensional complete toric variety it determines. We assume that 
$\Sig$ (and, hence, $X$) is simplicial without loss of generality, where needed we may 
also assume that $X$ is smooth.

Recall the definition of the homogeneous coordinate ring $S$ of the toric variety $X$.
Denote by $\rho_1,\dots,\rho_r$ the rays in $\Sig$ and $\vv_1,\dots,\vv_r\in N$ the corresponding primitive lattice vectors generating them. Each $\rho_i$ gives rise to a prime torus-invariant Weil divisor $D_i$.
We introduce one variable $x_i$ for each $D_i$ and consider the polynomial ring $S=\K[x_1,\dots,x_r]$. 
Also, for each cone $\sig \in \Sig$ we denote $\dis x^{\hat{\sig}}=\Pi_{\rho_i \notin \sig}^{}x_i$. This gives rise to the irrelevant ideal $B=\la x^{\hat{\sig}} ~:~ \sig \in \Sig\ra$ in $S$.

The collection $\vv_1,\dots,\vv_r\in N$ defines a map $\phi:M\to\Z^r$ which sends $\m\in M$ to the vector $(\la\m,\vv_1\ra,\dots, \la\m,\vv_r\ra)\in\Z^r$. The class group $\Cl(X)$ of Weil divisors on $X$ modulo linear equivalence is generated by the classes of $D_1,  \dots, D_r$ (see \cite[Sec.3.4]{F}). Since $X$ is complete $\vv_1,\dots,\vv_r$ spans $N_\R$ which leads to the 
following exact sequence:
$$\dis \xymatrix{ \mathfrak{P}: 0  \ar[r] & M \ar[r]^{\phi} & \Z^r \ar[r]^{\deg} & \Cl(X) \ar[r]& 0},$$
where the degree map sends $(a_1,\dots,a_r)$ to the class $[\sum_j a_jD_j]$.\\
 Applying  $Hom(-,\T)$ we get the dual exact sequence
 $$\dis \xymatrix{ \mathfrak{P}^*: 1  \ar[r] & \Hom(\Cl(X),\T) \ar[r]^{~~~~~~~~~~i} & \T^r \ar[r]^{\pi~~~~~~~~} & \Hom(M,\T) \ar[r]& 1}.$$It is convenient to write the above sequences in a more explicit way by fixing a coordinate system. For this
choose a basis $\{e_1,\dots, e_n\}$ for $M$  and the dual basis $\{e^*_1,\dots, e^*_n\}$ for $N$.
This allows us to represent elements of $M$ by column vectors and elements of $N$ by row vectors in $\Z^n$.
The map $\phi$ is now given by an integer matrix with rows  $\vv_1,\dots, \vv_r$. 
We thus obtain
$$\dis \xymatrix{ \mathfrak{P}: 0  \ar[r] & \Z^n \ar[r]^{\phi} & \Z^r \ar[r]^{{\deg}} & \cA \ar[r]& 0},$$  
where $\cA=\Z^r/\phi(\Z^n)\cong\Cl(X)$ and $\deg$ is the projection map.


As for the dual sequence, we get an isomorphism $\Hom(M,\T)\cong \T^n$ and  the map $\pi$
becomes
$$\pi:(\xi_1,\dots, \xi_r)\mapsto(\xi^{\uu_1},\dots, \xi^{\uu_n}),$$
where $\uu_1,\dots, \uu_n$ are the columns of $\phi$.  We get 
$$\dis \xymatrix{ \mathfrak{P}^*: 1  \ar[r] & G \ar[r]^{i} & \T^r \ar[r]^{\pi} & \T^n \ar[r]& 1}.$$
The map $i$ is an injection and we identify $G=\Hom(\cA,\T)$ with the kernel of $\pi$.

Recall the definition of $X$ as a GIT quotient. The map $\pi$ from $\mathfrak{P}^*$ 
is in fact the restriction of the geometric quotient map (which we also denote by $\pi$)
$$\pi: \cK\setminus V(B) \rightarrow X$$ 
to the torus $\T^r\subset\cK$. Here $V(B)$ denotes the affine subvariety defined by the irrelevant ideal $B$, so  $V(B)$ is a union of coordinate subspaces. 

The exact sequence $\mathfrak{P}$ endows the ring $S$ with a 
multigrading  by setting degrees of variables as $\bb_j:=\deg_{\cA}(x_j):=\deg(e_j)$, where $e_j$ is the standart basis element of $\Z^r$ for each $j\in[r]$. 
Thus, $\dis S=\bigoplus_{\aa \in \cA} S_{\aa}$, where $S_{\aa}$ denotes the set of homogeneous polynomials in $S$ of multidegree $\aa$. We note that the degrees of homogeneous polynomials of $S$ actually lie in the semigroup $\N\bb$ generated by $\bb_1,\dots,\bb_r$, i.e. $\dim_{\K}{S}_{\aa}=0$ when $\aa \notin \N\bb$. 

The isomorphism $\cA \cong \Cl(X)$ allows us to speak of degrees $\alpha\in\cA$ lying in 
$\Pic(X)\subseteq\Cl(X)$.
%
%
Furthermore, given a Cartier divisor $D$ on $X$ we say that 
it is {\it semi-ample} (or basepoint free as in \cite{CLSch}) if the corresponding line bundle $\cO(D)$ is generated by global sections.
Since the property of being ample (resp. semi-ample) is 
preserved under linear equivalence we may speak of ample (resp. semi-ample) degrees $\alpha$ in $\cA$.
The fan $\Sig$ of $X$ determines an important subsemigroup $\cl K$ of the semigroup $\N\bb$. Namely, $\dis \cl K=\cap_{\sig \in \Sig} \N\hat{\sig}$, where $\N\hat{\sig}$ is the semigroup generated by the subset $\{\bb_j \; : \; \rho_j \notin \sig\}$. 
Geometrically, $\N\bb$ corresponds to the subset of $\Cl(X)$ containing the classes of effective Weil divisors on $X$ and $\cl K$ corresponds to the subset of $\Pic(X)$ containing the classes of nef (numerically effective) line bundles on $X$. By \cite[Theorem 6.3.12]{CLSch}, $\cl K$ is the set of semi-ample degrees in $\N\bb \subseteq \cA$.

Now let  $D=\sum_{j=1}^r a_j D_j$ be a torus-invariant Weil divisor on $X$. 
It defines a rational polytope
 $$P_D:=\{\m\in M_\R\ :\  \la \m, \vv_j \ra \geq -a_j, ~~\forall j \in [r]\}.$$ 
Note that equivalent divisors in $\alpha\in\Cl(X)$ will have the same polytope up to a lattice translation. We thus write $P_{\aa}$ 
 to denote any $P_D$ for $D\in\alpha$. We will also write $P_{\aa}\subseteq P_{\aa'}$
 when $P_D\subseteq P_{D'}$ for some $D\in\aa$ and $D'\in\aa'$, and simply say that $P_{\aa}$ is contained
 in $P_{\aa'}$.  According to \cite[Sec. 1]{Coxhom}  $S_{\aa}$ is isomorphic to the space of global sections of the sheaf $\cO(D)$ which is spanned by characters of the torus $\T$:
  \begin{equation}\label{e:span}
S_\aa\cong \bigoplus_{\m\in P_\aa\cap M}\K\chi^\m.
 \end{equation}
  In particular, we obtain
 \begin{equation}\label{e:dimension}
 \dim_{\K} S_{\aa}=|P_\aa\cap M|.
 \end{equation}
For semi-ample degrees $\aa$ the situation is especially nice.  First, $P_{\aa}$ is a lattice polytope. Second, one  can reconstruct $\aa$ from its polytope $P_{\aa}$ by setting $$\aa=[\sum_{j=1}^r a_j D_j]\quad \text{where}\ \ 
 a_j=-\min_{\m\in P_{\aa}\cap M}\la \m,\vv_j\ra.$$ This implies that for $\aa$, $\aa'$ semi-ample we have 
 \begin{equation}\label{e:Minksum} 
P_{\aa+\aa'}=P_{\aa}+P_{\aa'},
 \end{equation}
 where the sum on the right is the Minkowski sum of polytopes.  See \cite[Sec.3.4]{F} or \cite{CLSch} for details.


Next we recall the ideal--scheme correspondence
for simplicial toric varieties (for details, see \cite[Sections 2 and 3]{Coxhom}).
Recall that the saturation of $J$ with respect to the irrelevant ideal $B$ (or simply  the $B$-saturation of $J$) is the ideal
$$J:B^{\infty}:=\{F\in S \ : \ F \cdot B^k \subseteq J \quad \mbox{for some integer} \;\; k\geq 0\}.$$
If $J=J:B^{\infty}$ we say that $J$ is $B$-saturated. 
Furthermore,
$J$ is said to be Pic-saturated if $J_{\aa}=(J:B^{\infty})_{\aa}$ for all $\aa \in \Pic (X)$ and is called Pic-generated if it is generated by polynomials whose degrees belong to $\Pic (X)$. It is clear that a $B$-saturated homogeneous ideal is Pic-saturated, as $\Pic(X) \subseteq \Cl(X)$ and the two notions coincide when $X$ is smooth in which case $\Pic(X) = \Cl(X)$.

\begin{pro}\label{P:ideal-scheme} Let $X$ be a complete simplicial toric variety.
\begin{enumerate}
\item For every closed  subscheme $Y$ of $X$ there is a unique Pic-generated and Pic-saturated  homogeneous ideal $I(Y)$ of $S$  defining $Y$. 
\item For every reduced closed  subscheme $Y$ of $X$ there is a unique radical $B$-saturated  homogeneous 
ideal $I(Y)$ of $S$  defining $Y$.
\end{enumerate}
\end{pro}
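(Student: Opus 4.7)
The strategy is to invoke Cox's GIT description $X=(\K^r\setminus V(B))/G$ and pass freely between $\cA$-multigraded ideals of $S$ and $G$-invariant closed subschemes of $\K^r\setminus V(B)$. Given a closed subscheme $Y\subset X$, I would first form the preimage $\pi^{-1}(Y)\subset \K^r\setminus V(B)$, then take its scheme-theoretic closure $\tilde Y\subset\K^r$. The defining ideal $J\subset S$ of $\tilde Y$ is $G$-invariant and hence $\cA$-multigraded. Because $\tilde Y$ has no irreducible component inside $V(B)$ by construction, $J$ is $B$-saturated; and if $Y$ is reduced then so are $\pi^{-1}(Y)$ and its closure, so $J$ is radical.

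For part (2), setting $I(Y):=J$ gives existence. For uniqueness, suppose $J'$ is another radical $B$-saturated multigraded ideal defining $Y$. Then $J$ and $J'$ cut out the same closed subscheme on $\K^r\setminus V(B)$; since both are radical and $B$-saturated, no difference supported on $V(B)$ can arise, and so they coincide globally in $S$.

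For part (1), replace $J$ by the multigraded subideal generated by $\bigcup_{\alpha\in\Pic(X)}J_\alpha$. This is Pic-generated by construction, and it defines the same subscheme $Y$: on a simplicial toric affine chart $U_\sigma$, every Weil divisor class becomes Cartier, so local sections of the ideal sheaf of $Y$ are already produced by Pic-degree elements of $J$. Uniqueness follows from the fact that a Pic-generated Pic-saturated multigraded ideal defining $Y$ is determined by its Pic-graded components, which are themselves determined by $Y$ via the correspondence $S_\alpha\cong H^0(X,\cO(\alpha))$ from \re{span}.

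The main obstacle is verifying the Pic-saturation condition $(I(Y):B^\infty)_\alpha=I(Y)_\alpha$ for $\alpha\in\Pic(X)$. For Cartier $\alpha$ the sheaf $\cO(\alpha)$ is locally free, so any global section vanishing on $Y$ lifts uniquely to a polynomial in $S_\alpha$ vanishing on $\tilde Y$, with no $B$-power obstruction; this reduces the check to a local computation on the simplicial cover and is essentially the content of \cite[Sec.~3]{Coxhom}. As a sanity check, when $X$ is smooth one has $\Pic(X)=\Cl(X)$, and the two parts then reduce to the same statement about radical $B$-saturated multigraded ideals.
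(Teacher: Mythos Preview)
The paper does not actually prove this proposition: it is stated without proof and attributed to Cox \cite[Sections~2 and~3]{Coxhom}. So there is no argument in the paper to compare against beyond the citation.

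Your sketch is a faithful outline of Cox's approach and is essentially correct. A couple of comments: For part~(1), your construction of $I(Y)$ as the Pic-generated subideal of the $B$-saturated ideal $J$ does work, and in fact the Pic-saturation check is simpler than you suggest. Since $I(Y)_\alpha = J_\alpha$ for every $\alpha\in\Pic(X)$ (taking $g=1$ in the generating set), and since $J$ is already $B$-saturated, one has $(I(Y):B^\infty)_\alpha \subseteq (J:B^\infty)_\alpha = J_\alpha = I(Y)_\alpha$ directly, with no local computation needed. What does require the simplicial hypothesis is that the Pic-graded pieces determine the ideal sheaf on $X$, i.e.\ that a Pic-generated ideal suffices to cut out $Y$; this is where Cox uses that on each affine chart $U_\sigma$ the class group is torsion, so some multiple of every divisor class is Cartier. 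Your remark that ``every Weil divisor class becomes Cartier'' on $U_\sigma$ overstates this slightly---it is only $\Q$-Cartier in general---but the conclusion survives because one can clear denominators by raising to a suitable power.
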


By \cite[Section 2]{Coxhom} we identify a reduced closed subscheme $Y\subset X$ defined by $I$ with
the closed subset
$$V_X(I):=\{\pi(\x) \in X ~:~f(\x)=0, ~\forall f\in I\},$$ 
the image of the closed subset $V(I)\subset \K^r\setminus V(B)$ under the geometric quotient map.

%

\begin{defi}\label{D:CI} 
We say that a closed subscheme $Y\subset X$ is a {\it complete intersection} if $I(Y)$ is generated by
a regular sequence of homogeneous polynomials $F_1,\dots, F_k\in S$ 
where $k$ is the codimension of $Y$ in $X$. 
\end{defi}

\begin{ex} \label{hir} Let $X=\cl H_{\ell}$ be the Hirzebruch surface which will be a running smooth example. 
We identify $M$ and $N$ with $\Z^2$ and consider the fan in $\R^2$ with rays generated by $\vv_1=(1,0)$, $\vv_2=(0,1)$, $\vv_3=(-1,\ell)$,
and $\vv_4=(0,-1)$.
Writing the maps in the standard bases, we obtain the exact sequence $\mathfrak{P}$:
$$\dis \xymatrix{ \mathfrak{P}: 0  \ar[r] & \Z^2 \ar[r]^{\phi} & \Z^4 \ar[r]^{\deg}& \Z^2 \ar[r]& 0},$$  
where $$\phi=\begin{bmatrix}
 1 & 0 & -1& ~~0 \\
0 & 1 & ~~~\ell& -1
\end{bmatrix}^T   \quad  \mbox{ and} \quad \deg=\begin{bmatrix}
1 & -\ell & 1& 0 \\
0 & ~~~1 & 0&1  
 \end{bmatrix}.$$ 
 As $X$ is smooth, $\cA=\Cl (X)=\Pic (X)=\Z^2$. The ring $S= \K[x,y,z,w]$ is graded by $\deg_{\cA}(x)=\deg_{\cA}(z)=(1,0), \deg_{\cA}(y)=(-\ell,1), \deg_{\cA}(w)=(0,1)$, and the semigroup of semi-ample degrees $\cl K=\N^2 \subset \N\bb=\N \{(1,0), (-\ell,1), (0,1)\}$ in this case. 
The group $G$ is parametrized by the columns of the matrix $\deg$, hence, $$G=\{(t_1,t_1^{-\ell}t_2,t_1,t_2) \:|\: t_1,t_2 \in \T\}.$$ We denote by $[x:y:z:w]$ the homogeneous coordinates of points on $\cl H_{\ell}$, using $[x:y:z:w]:=G \cdot (x,y,z,w)$. For instance, $$[0:0:1:1]:=G \cdot (0,0,1,1)=\{(0,0,t_1,t_2) \:|\: t_1,t_2 \in \T\}.$$
Now consider $J=\la xz,yw \ra$. This is a $B$-saturated radical homogeneous ideal of $S$, where $B=\la xy,yz,zw,wx\ra$. 
So, we have $$Y=V_X(J)=\{[0:0:1:1],[0:1:1:0],[1:0:0:1],[1:1:0:0]\}.$$
Note that $V_X(J)$ is a zero-dimensional complete intersection that does not lie in the torus 
$\T^4/G=\{[1:t_1^{\ell}t_2:t_1^{-1}:1] \: | \: t_1,t_2 \in \T\}\cong\T^2$, where the last isomorphism comes from the short exact sequence $\mathfrak{P}^*$.
\end{ex}
We end this section with a theorem which provides us with certain sufficient conditions for a zero-dimensional 
subscheme to be a complete intersection. 
\begin{tm} \label{Koszul} Let $X$ be a simplicial toric variety
and $F_i\in S$ be homogeneous polynomials with $\deg_{\cA}(F_i)\in \Pic(X)$, for $1\leq i\leq n$. 
Suppose that $\la F_1,\dots, F_n\ra$ is Pic-saturated and defines a  zero-dimensional subscheme 
$Y\subset X$. Then 
\begin{enumerate}
\item $I(Y)=\la F_1,\dots, F_n \ra$,
\item the minimal free resolution of $I(Y)$ is given by the Koszul complex of $F_1,\dots, F_n$,
\item $Y$ is a complete intersection in $X$.
\end{enumerate}
\end{tm}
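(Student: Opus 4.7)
Set $J := \la F_1, \ldots, F_n \ra$. Part (1) is immediate from Proposition~\ref{P:ideal-scheme}(1): the ideal $J$ is Pic-generated (each $\deg_\cA(F_i) \in \Pic(X)$), Pic-saturated by hypothesis, and defines $Y$, so uniqueness of the Pic-generated, Pic-saturated representative of $Y$ forces $I(Y) = J$.

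For part (3), the plan is to show that $F_1, \ldots, F_n$ forms a regular sequence in $S = \K[x_1, \ldots, x_r]$, which together with part (1) gives $Y$ as a complete intersection in the sense of Definition~\ref{D:CI}. Since $S$ is Cohen-Macaulay, a sequence of $n$ homogeneous elements is regular if and only if the ideal they generate has height exactly $n$, equivalently if and only if $V(J) \subset \K^r$ is of pure dimension $r - n$. Writing $V(J) = (V(J) \cap U) \cup (V(J) \cap V(B))$ with $U = \K^r \setminus V(B)$, the ideal--scheme dictionary of \rs{prel} combined with part (1) gives $V(J) \cap U = \pi^{-1}(Y)$, which has pure dimension $r - n$ because $\pi\colon U \to X$ is a geometric quotient with $(r-n)$-dimensional $G$-fibers and $Y$ is zero-dimensional. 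It then remains to rule out irreducible components of $V(J)$ lying entirely inside $V(B)$ of dimension greater than $r - n$; any such component would, via primary decomposition, contribute an element of $(J : B^\infty) \setminus J$ in some $\Pic(X)$-degree, contradicting the Pic-saturation hypothesis $J_\aa = (J : B^\infty)_\aa$ for $\aa \in \Pic(X)$. Once height is settled, part (2) follows immediately from the Koszul regularity criterion: a regular sequence yields a free resolution of $I(Y) = J$ by the (truncated) Koszul complex $K_\bullet(F_1, \ldots, F_n)$, and this resolution is minimal because every entry of every Koszul differential is $\pm F_i$ and hence lies in the graded maximal ideal of $S$.

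The main obstacle is the last step of the height argument: excluding components of $V(J)$ buried inside $V(B)$ of too-large dimension. Since Pic-saturation is strictly weaker than $B$-saturation when $X$ is singular, this exclusion is genuinely nontrivial; the argument must leverage the interplay between the $\Pic(X)$-grading on $S$ and the $B$-primary components of $J$, and it is precisely the combination of the hypotheses $\deg_\cA(F_i) \in \Pic(X)$ and Pic-saturation of $J$ that makes this possible.
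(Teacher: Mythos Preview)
Your overall strategy coincides with the paper's: both use \rp{ideal-scheme} for part (1), compute $\dim\pi^{-1}(Y)=r-n$ via the $G$-fibration $\pi\colon\K^r\setminus V(B)\to X$, invoke Cohen--Macaulayness of $S$ to equate height with grade, and then use the standard Koszul criterion (the paper cites \cite[Theorem~17.4]{eis}) to get exactness of the Koszul complex, equivalently regularity of $F_1,\dots,F_n$. The only cosmetic difference is that the paper deduces (3) from (2) while you run it the other way; you also make explicit the minimality of the Koszul resolution, which the paper leaves implicit.

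The substantive divergence is the $V(B)$ issue. The paper does not engage with it: after observing that $\pi^{-1}(Y)$ has dimension $r-n$ inside $\K^r\setminus V(B)$, it simply writes ``Thus, $\codim I(Y)=n$'' and proceeds. So the obstacle you flag---possible components of $V(J)$ of dimension $>r-n$ buried in $V(B)$---is one the paper passes over in silence. Your attempt to resolve it is only a sketch: the claim that a $B$-primary component of $J$ would force $(J:B^\infty)_\aa\neq J_\aa$ at some $\aa\in\Pic(X)$ is plausible but not proved, and you yourself call it ``genuinely nontrivial.'' In the smooth case the issue evaporates, since then $\Pic(X)=\Cl(X)$, Pic-saturation is $B$-saturation, and $J=J:B^\infty$ has no associated primes containing $B$; for singular simplicial $X$ neither your argument nor the paper's is complete as written on this point.
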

\begin{proof} Part (1) follows from \rp{ideal-scheme} since $\la F_1,\dots, F_n \ra$ is Pic-generated and Pic-saturated.
Let $\Supp Y=\{p_1,\dots, p_N\}$ be the support of $Y$ 
and consider the following preimage $$\dis \pi^{-1}(\Supp Y)=\bigcup_{i=1}^{N} \pi^{-1}(p_i).$$
Since $\pi$ is a morphism of algebraic varieties, all the fibers are isomorphic to the algebraic group~$G$. This implies that $\dis \pi^{-1}(Y)$ has dimension $r-n=\dim G$ as a subvariety of $ \K^r \setminus V(B)$. Thus, $\codim I(Y)=n$. Since  $S$ is Cohen-Macaulay, it follows that $\depth I(Y)=n$, which together with \cite[Theorem 17.4]{eis} implies that the Koszul complex of $F_1,\dots, F_n$ gives a minimal free resolution of $I(Y)$. This proves part (2).
Finally, the Koszul complex of $F_1,\dots, F_n$ is exact if and only if $F_1,\dots, F_n$ form a regular sequence in $S$ which completes the proof of part (3).
\end{proof}

\section{Multigraded Hilbert function} \label{S:mhf}

Let $S= \K[x_1,\dots,x_r]$ be the homogeneous coordinate ring of a complete simplicial toric variety $X$ over an algebraically closed field $\K$. In this section, we investigate the behavior of the multigraded Hilbert functions of zero-dimensional closed subschemes $Y$ of $X$. For the rest of the paper, we additionally assume that $\Cl(X)$ has no torsion which is the case, for example, for smooth $X$ by \cite[Proposition 4.2.5]{CLSch}. Under these circumstances, $\dis S=\bigoplus_{\aa \in \cA} S_{\aa}$ is \textit{positively} multigraded by $\cA$. In particular, every $S_\aa$
is finite-dimensional.
If $I$ is an $\cA$-graded ideal in $S$, it is called homogeneous, and the quotient ring $S/I$ inherits the multigraded structure yielding a decomposition $\dis S/I=\bigoplus_{\aa \in \cA} ({S/I})_{\aa}$, where $({S/I})_{\aa}=S_{\aa}/I_{\aa}$ is a finite dimensional vector space spanned by monomials of degree $\aa$ which do not belong to $I$. This gives rise to the multigraded Hilbert function and series defined respectively by 
$$H_{S/I}(\aa):=\dim_{\K}(S/I)_{\aa}=\dim_{\K}{S}_{\aa}-\dim_{\K} I_{\aa}$$
$$\mbox{and}\quad HS_{S/I}(\t)=\sum_{\aa \in \cA} H_{S/I}(\aa) \t^{\aa}.$$

Since the grading is positive, the semigroup $\N\bb$ is \textit{pointed} by Corollary 8.8 in \cite{MS}. In this case the ordering $\preceq $ is a partial order, where $\aa \preceq  \aa' $ if $\aa' -\aa \in \N\bb$. By Theorem $8.20$ in \cite{MS}, the Hilbert series is a rational function, that is, we have
$$HS_{S/I}(\t)=\frac{p_{S/I}(\t)}{(1-\t^{\bb_1})\cdots (1-\t^{\bb_r})},$$
for a unique Laurent polynomial $p_{S/I}(\t)$ with integer coefficients. 
\begin{defi} Let $Y$ be a closed subscheme of $X$ and $I(Y)$ the corresponding 
homogeneous ideal in $S$ as in \rp{ideal-scheme}. We define the {\it multigraded Hilbert function} $H_Y$ of 
$Y$ to be the multigraded Hilbert function of the quotient ring $S/{I(Y)}$.
\end{defi}

As in the classical case the multigraded Hilbert function gives information about the geometry of $Y$ such as the degree of $Y$, denoted $\deg(Y)$, which is defined to be the length of $Y$ as a subscheme. Recall that the length of $Y$ is the dimension of the $\K$-vector space $\Gamma(Y,\cO_Y)$ 
Therefore, if $Y$ has 
degree $N$ and is supported at points $p_1,\dots, p_s$, then $\Gamma(Y,\cO_Y)=\cO_{Y,p_1} \times \cdots \times \cO_{Y,p_s}$ and $N=d_1+\cdots+d_s$, where the positive integer $d_i=\dim_{\K} \cO_{Y,p_i}$ is called the multiplicity of $Y$ at $p_i$ for $i=1,\dots,s$. In particular, $\deg(Y)=|Y|=N$, when $Y$ is a reduced union of $N$ distinct points. 

\subsection{Behavior of the multigraded Hilbert function}
We start with  the classical case when $X=\Pp^{n}$ and so $\cA=\Z$, $\N\bb=\N$ as all $\bb_i=1$.
Assume $Y$ is a reduced zero-dimensional subscheme of $\Pp^{n}$. Then the Hilbert function of $Y$ has the following nice properties
(see for instance \cite{DRT} or \cite{GM}). 
\begin{pro} \label{Pn} Let $Y\subset\Pp^n$ be a reduced zero-dimensional subscheme. 
Let $I(Y)=\bigoplus_{k=k_0}^{\infty} I(Y)_{k}$ with $I(Y)_{k_0}\neq 0$. Then there is an integer $a_Y$ called the $a$-invariant of $Y$ such that the following hold.\\
(\textbf{i}) $ H_Y(k)=\dim_{\K}S_k$ if and only if $k <k_0$, \\
(\textbf{ii}) $ H_Y(k)<H_Y(k+1)< |Y|$ for $0\leq k <a_Y$,  \\
(\textbf{iii}) $ H_Y(k)= |Y|$ for $a_Y<k. \hfill \Box$ 
\end{pro}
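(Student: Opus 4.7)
The plan is to prove the three parts using two classical tools: the evaluation map at the points of $Y$ and a linear non-zero-divisor on $S/I(Y)$. Write $\Supp Y = \{p_1, \ldots, p_N\}$ with $N = |Y|$, and for each $i$ fix an auxiliary linear form $\ell_i \in S_1$ with $\ell_i(p_i) \neq 0$. This produces a well-defined $\K$-linear evaluation map $\ev_k : S_k \to \K^N$ sending $f \mapsto \bigl(f(p_i)/\ell_i(p_i)^k\bigr)_i$. Since $Y$ is reduced, $I(Y)$ is radical and $\ker \ev_k = I(Y)_k$, so $H_Y(k) = \dim_\K \operatorname{im}(\ev_k) \leq N$. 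Part (\textbf{i}) is then immediate: $H_Y(k) = \dim_\K S_k$ iff $I(Y)_k = 0$ iff $k < k_0$.

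For the eventual stabilization at $N$, I would use a Lagrange-type interpolation. For each $i$ one builds a polynomial $F_i \in S_{d_i}$ (for some $d_i$) vanishing at $p_j$ for $j \neq i$ but not at $p_i$, for instance as a product of linear forms separating $p_i$ from each $p_j$. Then $\ell_i^{k-d_i} F_i \in S_k$ maps under $\ev_k$ to a nonzero multiple of the $i$-th standard basis vector of $\K^N$ as soon as $k \geq d_i$; hence $\ev_k$ is surjective for $k \geq \max_i d_i$, and $H_Y(k) = N$ for all $k$ sufficiently large.

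Finally, since $\K$ is infinite, a general linear form $\ell \in S_1$ avoids all $p_i$; such $\ell$ is a non-zero-divisor modulo $I(Y)$, yielding the short exact sequence
$$
0 \to (S/I(Y))_k \xrightarrow{\,\cdot\, \ell\,} (S/I(Y))_{k+1} \to \bigl(S/(I(Y),\ell)\bigr)_{k+1} \to 0.
$$
Injectivity of multiplication by $\ell$ gives $H_Y(k) \leq H_Y(k+1)$. The cokernel is a graded piece of $R := S/(I(Y),\ell)$, which is an Artinian standard graded quotient because $V(I(Y), \ell) \subseteq \Pp^n$ is empty. Hence $R_{k+1} = 0$ forces $R_m = 0$ for all $m \geq k+1$ by graded Nakayama, and then multiplication by $\ell$ shows that $H_Y$ is constant from $k$ onwards; by the preceding paragraph the constant value must be $N$. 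Therefore, whenever $H_Y(k) < N$ one has the strict inequality $H_Y(k) < H_Y(k+1)$. Setting $a_Y := \max\{k : H_Y(k) < N\}$, which is well-defined by the stabilization step, parts (\textbf{ii}) and (\textbf{iii}) follow by combining strict monotonicity with the definition of $a_Y$.

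The main subtlety is the Nakayama step that promotes a single equality $H_Y(k) = H_Y(k+1)$ to permanent stabilization; it rests on $S/(I(Y),\ell)$ being Artinian, which in turn uses that $\K$ is infinite so that $\ell$ can be chosen to avoid all points of $Y$. Everything else is organizational bookkeeping around the evaluation map.
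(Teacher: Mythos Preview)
The paper does not actually prove this proposition: it is stated as a known classical fact with the references \cite{DRT} and \cite{GM}, and closed with a $\Box$. So there is no ``paper's proof'' to compare against.

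Your argument is correct and is essentially the standard one found in those references. The three ingredients --- the evaluation map identifying $H_Y(k)$ with the rank of $\ev_k$, the Lagrange-type interpolation giving eventual surjectivity, and the non-zerodivisor $\ell\in S_1$ producing the short exact sequence and the Artinian quotient $S/(I(Y),\ell)$ --- are exactly what one needs. The Nakayama step is fine: since $S$ is standard graded, $R_{m+1}$ is spanned by $R_1\cdot R_m$, so $R_{k+1}=0$ forces $R_m=0$ for all $m\ge k+1$. One tiny edge case you could mention explicitly is $|Y|=1$, where $H_Y(0)=1=N$ already and the set $\{k:H_Y(k)<N\}$ is empty; then one sets $a_Y=-1$ and part~(\textbf{ii}) is vacuous. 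Otherwise nothing is missing.
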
 

In the rest of the section, we investigate how much these properties of the classical Hilbert function extend to the multigraded setting.  Namely, in Theorem \ref{genHF} below we generalize properties (\textbf{i}) 
and (\textbf{ii}) (under an extra assumption) to zero-dimensional subschemes 
of any simplicial toric variety $X$.  Furthermore, \rt{bound} provides a generalization
of property (\textbf{iii}) for complete intersection subschemes of $X$.  In the important case when $Y$
lies in the torus $\T^n$ the properties of $H_Y$ are summarized in Corollary \ref{HF}.

To start with, let us mention that properties (\textbf{ii})  and (\textbf{iii}) need not hold without extra
assumptions as the following illustrates.
\begin{ex} \label{123} Let $X=P(1,2,3)$ be the weighted projective surface with homogeneous coordinate ring $S=\K [x,y,z]$ where $\deg_{\cA}(x)=1$, $\deg_{\cA}(y)=2$ and $\deg_{\cA}(z)=3$. Consider the ideal $J=\la x,z \ra$ which gives rise to the irreducible reduced complete intersection subscheme $Y=\{[0:1:0]\}$. As $J$ is $B$-saturated, $I(Y)=J$. The multigraded Hilbert function is $H_Y(2\aa)=1$ and $H_Y(2\aa+1)=0$, for all non-negative integers $\aa$, since the Hilbert series is as follows
$$HS_{S/J}(\t)=\frac{1-\t-\t^3+\t^4}{(1-\t)(1-\t^{2})(1-\t^{3})}=\frac{1}{1-\t^{2}}.$$

Consider now the ideal $J'=\la y,z^3 \ra$ which gives rise to the triple point $Y'=\{[1:0:0]\}$. Again $I(Y')=J'$ and the multigraded Hilbert function is given by $H_{Y'}(\aa)=1$, for $\aa=0,1,2$, $H_{Y'}(\aa)=2$ for $\aa=3,4,5$, and $H_{Y'}(\aa)=3=\deg(Y')$, for all integers $\aa \geq 6$, since the Hilbert series is 
$$HS_{S/J'}(\t)=\frac{1-\t^2-\t^9+\t^{11}}{(1-\t)(1-\t^{2})(1-\t^{3})}=\frac{1+\t^{3}+\t^{6}}{1-\t^{}}.$$

\end{ex}


The example above indicates that the behavior of the Hilbert series and Hilbert function depends on the smallest possible degree of a non-zerodivisor in the ring $S/I(Y)$. In the first case this number is two and so $H_Y$ reveals a non-decreasing behavior in the even degrees and odd degrees, separately. In the second case this number is one and $H_Y$ is non-decreasing at each degree. The next lemma confirms this observation and its proof is an extension of the proof of Proposition $3.5$ in \cite{Van}.

\begin{lm} \label{lemHF} If $I$ is a homogeneous ideal of $S$ such that there is a non-zerodivisor in $S/I$ of degree $\aa_0 \in \N\bb$, then the following holds.
$$ H_{S/I}(\aa)\leq H_{S/I}(\aa+\aa_0), \mbox{~~ for all~~} \aa \in \N\bb.$$
\end{lm}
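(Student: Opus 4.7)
The plan is to use the standard trick of multiplying by a non-zerodivisor to produce an injective graded map between consecutive pieces of $S/I$.

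First I would pick a homogeneous element $F\in S_{\aa_0}$ whose image $\bar F\in (S/I)_{\aa_0}$ is a non-zerodivisor (guaranteed by hypothesis). Then for every $\aa\in\N\bb$, multiplication by $\bar F$ defines a $\K$-linear map
\[
\mu_{\bar F}\colon (S/I)_{\aa}\longrightarrow (S/I)_{\aa+\aa_0},\qquad \bar g\longmapsto \bar F\cdot\bar g,
\]
which is well defined because the multigrading of $S$ descends to $S/I$ and respects the semigroup law in $\N\bb$ (so $\aa+\aa_0\in\N\bb$ whenever $\aa\in\N\bb$).

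Next I would observe that $\mu_{\bar F}$ is injective: if $\bar F\cdot\bar g=0$ in $S/I$, then since $\bar F$ is a non-zerodivisor in $S/I$, one has $\bar g=0$. Since every $(S/I)_{\aa}$ is finite-dimensional over $\K$ (the grading is positive, as noted right before the lemma), injectivity of $\mu_{\bar F}$ immediately yields
\[
H_{S/I}(\aa)=\dim_{\K}(S/I)_{\aa}\leq\dim_{\K}(S/I)_{\aa+\aa_0}=H_{S/I}(\aa+\aa_0),
\]
which is the desired inequality.

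There is essentially no obstacle here beyond bookkeeping; the only subtlety is making sure that the chosen $F$ genuinely exists as a homogeneous element of degree $\aa_0$ and that its class in $S/I$ is a non-zerodivisor in the graded sense (i.e.\ annihilates no homogeneous element), which is exactly the hypothesis of the lemma. The rest is the standard fact that multiplication by a non-zerodivisor gives an injection between graded components, shifted by the degree of the non-zerodivisor.
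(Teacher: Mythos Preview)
Your proof is correct and follows essentially the same approach as the paper: the paper packages the injectivity of multiplication by $\bar F_0$ into the short exact sequence
\[
0\to (S/I)_{\aa}\xrightarrow{\ \bar F_0\ }(S/I)_{\aa+\aa_0}\to (S/(I+\langle F_0\rangle))_{\aa+\aa_0}\to 0,
\]
but the only part used is the injectivity of the first map, exactly as you argue.
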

\begin{proof} Let $F_0\in S_{\aa_0}$ be a non-zerodivisor in $S/I$. Then the following complex is exact:
$$\dis \xymatrix{  0  \ar[r] & S/I \ar[r]^{\ov F_0~~~~} & (S/I)(\aa_0) \ar[r]^{}& (S/(I+\la F_0\ra))(\aa_0) \ar[r]& 0},$$
where $(S/I)(\aa_0)$ is a multigraded ring shifted by $\aa_0$, i.e. $((S/I)(\aa_0))_{\aa}=(S/I)_{\aa+\aa_0} $ for all $\aa\in\cA$.
Restricting to a degree $\aa \in \N\bb$, we get the exact sequence of the corresponding vector spaces: 
$$\dis \xymatrix{  0  \ar[r] & (S/I)_{\aa} \ar[r]^{\ov F_0~~~~} & (S/I)_{\aa+\aa_0} \ar[r]^{}& (S/(I+\la F_0\ra))_{\aa+\aa_0} \ar[r]& 0}.$$
The proof follows from the fact that the first map $\ov F_0$ is injective.
\end{proof}
\begin{rema}
We note that there exists a non-zerodivisor in $S/I$ of degree $\aa \in \cl K$, if $X$ is smooth and $I$ is $B$-saturated, see Remark $2.5$ in \cite{MacS5}. Sometimes $\cl K$ contains all of them as we see by taking $X=\cl H_{\ell}$ and $B$-saturated $J=\la xz,yw\ra$, as in example \ref{hir}. There is no non-zerodivisor in $S/J$ of degree $\aa \in \N \bb \setminus \cl K$, for if $F\in S_{\aa}$ with $\aa \in \N \bb \setminus \cl K$, then $y$ divides $F$, and thus $wF \in J$. Recall that in this case $V_X(J)$ does not lie in $\T^n$. On the other hand, if $Y \subseteq \T^n$ we have more non-zerodivisors, as we record next.
\end{rema}

\begin{lm}\label{lemdiv} Let $Y \subseteq \T^n$ be a reduced subscheme of $X$. 
Then, for any $\aa \in \N\bb$, there exists a non-zerodivisor in $S/I(Y)$ of degree $\aa$.
\end{lm}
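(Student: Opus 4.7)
The plan is to produce, for each $\aa\in\N\bb$, a monomial of degree $\aa$ that is a non-zero-divisor in $S/I(Y)$. Since $Y$ is reduced, the ideal $I(Y)$ is radical, and so its associated primes coincide with its minimal primes. Hence an element of $S$ is a non-zero-divisor modulo $I(Y)$ exactly when it lies outside every minimal prime of $I(Y)$.

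The central step is to show that each variable $x_j$ is a non-zero-divisor modulo $I(Y)$. Let $\mathfrak{p}$ be a minimal prime of $I(Y)$. Because $I(Y)$ is $B$-saturated, $\mathfrak{p}$ cannot contain $B$: writing $\mathfrak{p}=(I(Y):F)$ for some $F\in S$, the inclusion $B\subseteq\mathfrak{p}$ would give $FB\subseteq I(Y)$, hence $F\in I(Y):B^\infty=I(Y)$, a contradiction. Therefore $V(\mathfrak{p})\setminus V(B)$ is a non-empty (hence dense) open subset of $V(\mathfrak{p})$, and its image under $\pi$ lies in $V_X(\mathfrak{p})\subseteq V_X(I(Y))=Y\subseteq\T^n$. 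It follows that $V(\mathfrak{p})\setminus V(B)\subseteq \pi^{-1}(\T^n)=\T^r$. Since $V(x_j)\cap\T^r=\emptyset$, the inclusion $V(\mathfrak{p})\subseteq V(x_j)$ is impossible, so $x_j\notin\mathfrak{p}$.

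With each $x_j$ a non-zero-divisor modulo $I(Y)$, any monomial $x^{\uu}=x_1^{u_1}\cdots x_r^{u_r}$ is also a non-zero-divisor modulo $I(Y)$, being a product of non-zero-divisors. Given $\aa\in\N\bb$, write $\aa=\sum_{j=1}^r c_j\bb_j$ with $c_j\in\N$ and set $\uu=(c_1,\dots,c_r)$; then $x^{\uu}\in S_\aa$ is the required non-zero-divisor.

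The only real obstacle is bridging the hypothesis $Y\subseteq\T^n$, stated geometrically inside $X$, with the ideal-theoretic condition that $x_j$ avoids every minimal prime of $I(Y)\subseteq S$. This is precisely where $B$-saturation is indispensable, since it prevents any minimal prime from being supported entirely inside $V(B)$ and thereby permits the geometric statement about $Y$ lying in the dense torus orbit to be transferred back to $S$.
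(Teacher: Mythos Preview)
Your proof is correct and uses the same candidate non-zerodivisor as the paper---a monomial $x^{\uu}$ of degree $\aa$---together with the same geometric input that the coordinate hyperplanes miss $Y\subseteq\T^n$. The paper's argument is the direct three-line version: if $x^{\uu}F\in I(Y)$ then, since $Y$ lies in the torus and no $x_j$ vanishes there, $F$ itself must vanish on $Y$, whence $F\in I(Y)$ because $I(Y)$ is radical. You instead route the argument through the minimal primes of $I(Y)$ and make the role of $B$-saturation explicit (to ensure each minimal prime has points outside $V(B)$, which then must lie in $\T^r$). This is a legitimate and slightly more algebraic unpacking of what the paper leaves implicit in the step ``$Y\subseteq V(F)$ implies $F\in I(Y)$'', but the underlying idea is the same.
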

\begin{proof} Consider  $\aa=a_1\bb_1+\cdots+a_r\bb_r$, then we prove that the monomial $\x^\a=x_1^{a_1}\dots x_r^{a_r}\in S_{\aa}$ is a non-zerodivisor in $S/I(Y)$. Suppose that $\x^\a F \in I(Y)$ for some polynomial $F\in S$. Then $Y \subseteq V(x_1)\cup \cdots \cup V(x_r) \cup V(F)$. Since $Y \subset \T^n$, it follows that $Y \subseteq V(F)$ which implies that $F \in I(Y)$, as $I(Y)$ is a radical ideal.
\end{proof}
The next result summarizes general properties of the multigraded Hilbert function in the most general setup.
\begin{tm} \label{genHF}  Let $Y\subset X$ be a reduced zero-dimensional subscheme. 
The multigraded Hilbert function $H_Y$ has the following properties.\\
(\textbf{i}) If $P_{\aa}$ does not contain any $P_{\aa_i}$, for degrees $\aa_i$ of minimal generators of $I(Y)$,  then $H_Y(\aa)=| P_{\aa} \cap M |$,\\
(\textbf{ii}) If there is a non-zerodivisor in $S/I(Y)$ of degree $\bb_j$, for each $~j \in [r]$, then 
$H_Y(\aa) \leq H_Y(\aa')$ for all $\aa \preceq  \aa'$. \\
(\textbf{iii}) $H_Y(\aa) \leq \deg(Y)$, for all $~\aa \in \N\bb.$ 
\end{tm}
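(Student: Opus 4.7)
The plan is to handle the three parts in turn, each by a different standard technique adapted to the toric setting.

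For part (\textbf{i}), I would start from the presentation $I(Y) = \la F_1, \dots, F_m \ra$ with $\deg_\cA F_i = \aa_i$, so that $I(Y)_\aa = \sum_{i=1}^m F_i \cdot S_{\aa - \aa_i}$. Thus $I(Y)_\aa = 0$ as soon as $S_{\aa-\aa_i}=0$ for every $i$, and in that case $H_Y(\aa) = \dim_\K S_\aa = |P_\aa\cap M|$ by \re{dimension}. The key step is the translation between the geometric hypothesis ``$P_\aa$ does not contain $P_{\aa_i}$'' and the algebraic vanishing $S_{\aa-\aa_i}=0$: by \re{dimension} the latter is the same as $P_{\aa-\aa_i}\cap M = \emptyset$, and when $\aa_i$ is semi-ample the Minkowski sum identity \re{Minksum} shows that any lattice point $\m\in P_{\aa-\aa_i}$ would yield a lattice translate $\m+P_{\aa_i}\subseteq P_\aa$, contradicting the hypothesis.

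For part (\textbf{ii}), I would apply Lemma~\ref{lemHF} iteratively. Since $\aa\preceq \aa'$ means $\aa'-\aa = c_1\bb_1+\cdots+c_r\bb_r$ with $c_j\in\N$, interpolating through the intermediate degrees $\aa,\aa+\bb_{j_1},\aa+\bb_{j_1}+\bb_{j_2},\dots,\aa'$ and applying Lemma~\ref{lemHF} at each step with a non-zerodivisor of the relevant degree $\bb_j$ produces the chain $H_Y(\aa)\leq H_Y(\aa+\bb_{j_1})\leq \cdots\leq H_Y(\aa')$. This part is essentially bookkeeping once the lemma is in hand.

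For part (\textbf{iii}), the plan is to construct an explicit injection $(S/I(Y))_\aa \hookrightarrow \K^{|Y|}$ via evaluation at lifts of the points of $Y$. Writing $\Supp Y = \{p_1,\dots,p_N\}$, I would choose lifts $\tilde p_i \in \pi^{-1}(p_i)\subset \cK\setminus V(B)$ and define $\phi_\aa\colon S_\aa\to \K^N$ by $F\mapsto (F(\tilde p_1),\dots,F(\tilde p_N))$. The core step is the identification $\ker\phi_\aa = I(Y)_\aa$: the inclusion $\supseteq$ is immediate, while for the reverse one uses that $F$ is homogeneous of degree $\aa$ to get $F(g\cdot\tilde p_i)=\chi_\aa(g)\,F(\tilde p_i)=0$ for all $g\in G$, so $F$ vanishes on the $G$-invariant set $\pi^{-1}(Y)\subset\cK\setminus V(B)$, and then $B$-saturation of $I(Y)$ forces $F\in I(Y)$. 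The resulting injection yields $H_Y(\aa)\leq N = \deg Y$ since $Y$ is reduced. The hardest step will be exactly this identification $\ker\phi_\aa = I(Y)_\aa$, where one must carefully invoke the $B$-saturation hypothesis together with the description of $Y$ as the image under $\pi$ of the $G$-invariant subvariety $\pi^{-1}(Y)$ of $\cK\setminus V(B)$; this is the toric analogue of the classical saturation correspondence for projective schemes.
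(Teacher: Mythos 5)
Your parts (\textbf{i}) and (\textbf{ii}) follow essentially the paper's own argument: write $I(Y)_\aa=\sum_i F_i\,S_{\aa-\aa_i}$ and show $S_{\aa-\aa_i}=0$, then iterate Lemma \ref{lemHF} along a chain of degrees differing by the $\bb_j$. One caveat in (\textbf{i}): the theorem does not assume the $\aa_i$ are semi-ample, so you may not invoke the equality \re{Minksum}. You only need the inclusion $P_{\aa-\aa_i}+P_{\aa_i}\subseteq P_{\aa}$, which holds for arbitrary torus-invariant representatives directly from the defining inequalities of the polytopes; with that, a lattice point $\m\in P_{\aa-\aa_i}$ still yields the translate $\m+P_{\aa_i}\subseteq P_{\aa}$, contradicting the hypothesis. (The paper argues the contrapositive via degrees: $\aa-\aa_i\in\N\bb$ would force $P_{\aa}\supseteq P_{\aa_i}+P_{\gamma_i}\supseteq P_{\aa_i}$, hence $S_{\aa-\aa_i}=0$.)

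For (\textbf{iii}) you genuinely diverge: the paper simply defers to the proof of Proposition 6.7 in \cite{MacS4}, whereas you give a self-contained evaluation argument, which is in effect the toric analogue of the paper's later proof of Proposition~\ref{P:CodeHF}, extended from points of the torus to arbitrary points of $X$ via lifts. Your outline is sound; note that only $\ker\phi_\aa\subseteq I(Y)_\aa$ is needed, since then $H_Y(\aa)\leq\dim\phi_\aa(S_\aa)\leq N$. The key step is as you say: homogeneity gives vanishing of $F$ on $\pi^{-1}(p_i)=G\cdot\tilde p_i$ (here one uses that $\pi$ is a geometric quotient, $X$ being simplicial), so $bF$ vanishes on all of $V(I(Y))$ for every $b\in B$, whence $bF\in\sqrt{I(Y)}=I(Y)$ by the Nullstellensatz (radicality, i.e. reducedness of $Y$), and $B$-saturation gives $F\in I(Y)$. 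Your route buys an elementary proof independent of the multigraded regularity machinery of \cite{MacS4}; the paper's citation is shorter and covers non-reduced zero-dimensional subschemes, where bounding by the number of support points would not suffice.
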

\begin{proof}  (\textbf{i}) Since $\dim_{\K}S_{\aa}=|P_{\aa}\cap M|$ by equation \re{dimension}, it is enough to show that $I(Y) \cap S_{\aa}=\{0\}$ as in this case we have $\dim_{\K} I(Y)_{\aa}=0$ and hence
$$H_{Y}(\aa)=\dim_{\K}{S}_{\aa}-\dim_{\K} I(Y)_{\aa}=\dim_{\K}S_{\aa}.$$

\noindent Let $I(Y)=\la F_1, \dots ,F_m\ra$, where $\deg_{\cA}(F_i)=\aa_i$, for all $i \in [m]$. If $P_{\aa}$ does not contain any $P_{\aa_i}$, then $\aa-\aa_i \notin \N\bb$, as otherwise there would be some $\gamma_i \in \N\bb$ such that $\aa=\aa_i + \gamma_i$ which would imply that $P_{\aa} \supseteq P_{\aa_i}+P_{\gamma_i} \supseteq P_{\aa_i}$. Therefore $S_{\aa-\aa_i}=\{0\}$. Now, take any element $F \in  I(Y) \cap S_{\aa}$. Since $F=\sum_{i=1}^{m} G_i F_i$, with $G_i \in S_{\aa-\aa_i}$ it follows that $F=0$, completing the proof.

(\textbf{ii}) If $\aa \preceq  \aa'$ then $\aa'-\aa \in \N\bb$, i.e. there are non-negative integers $\mu_j$ so that 
$\aa'=\aa+\mu_1\bb_1+\cdots+\mu_n\bb_n$ and the result follows from Lemma \ref{lemHF}. 

(\textbf{iii}) This inequality follows easily from the proof of \cite[Proposition 6.7]{MacS4}.
\end{proof}

\begin{rema} The first item in Theorem \ref{genHF} above generalizes the first property of Proposition \ref{Pn}. 
Indeed, note that the polytope $P_k$ corresponding to degree $k$ polynomials is the simplex with vertices $\{0,ke_1,\dots, ke_n\}$.
Therefore $k<k_0\leq k_i$ is equivalent to $P_k \subset P_{k_0}\subseteq P_{k_i}$, where $k_0$ is the least degree among the degrees $k_i$ of minimal generators of $I(Y)$.
\end{rema}

\begin{rema} If the toric variety is a product of projective spaces $X=\Pp^{n_1} \times \cdots \times \Pp^{n_r}$ and $Y=d_1p_1+\cdots+d_sp_s$ is the closed subscheme determined by the ideal $J=I(p_1)^{d_1} \cap \dots \cap I(p_s)^{d_s}$, we have $\cA=\Z^r$ and $\N\bb=\N^r$, as the first $n_1$ variables have $\cA$-degree $\bb_1=e_1$, and the second $n_2$ variables  have  $\cA$-degree $\bb_2=e_2$ and so on. 
In this case Sidman and Van Tuyl proved stronger nondecreasing and stabilization properties of the 
Hilbert function of  $Y$. In particular, they showed that $ H_Y(\aa)\leq H_Y(\aa+e_j)$ for each $j \in [r]$ and 
all $\aa \in \N^r$. Moreover, if $ H_Y(\aa)= H_Y(\aa+e_j)$ for some $j \in [r]$ and for some $\aa \in \N^r$, then $ H_Y(\aa+e_j)= H_Y(\aa+2e_j)$, i.e. the Hilbert function stabilizes in that direction
(see \cite[Proposition 1.9]{Sid}).

\end{rema}

By the virtue of \cite[Proposition 6.7]{MacS4}, and to avoid unnecessary technicalities, we make the following

\begin{defi}  The {\it multigraded regularity} of $Y$, denoted $\reg(Y)$, is the set of $\aa\in \N\bb$ for which $H_Y(\aa)=\deg(Y)$, where $Y$ is any zero-dimensional closed subscheme of $X$. 
\end{defi}

Multigraded regularity $\reg(Y)$ is an interesting invariant measuring complexity of the subscheme $Y$. 
As an application, we use a bound on $\reg(Y)$
 to eliminate trivial toric (complete intersection) codes, see \rs{codes}.

\begin{rema} In the classical case of the projective space, Proposition \ref{Pn} yields $\reg(Y)=1+a_{Y}+\N$. On the other hand, by Proposition $2.1$ in \cite{DRT}, the $a$-invariant $a_Y$ equals the degree of the rational function corresponding to the Hilbert series of $S/I(Y)$, that is we have $$a_Y=\deg(p_{S/I(Y)})-(n+1).$$ 
\end{rema}

If we recall Example \ref{123}, we see that the Hilbert series of $S/I(Y)$ has degree $-2$ as a rational function, but $H_Y(\aa)$ does not stabilize after $-2$, i.e. for $\aa \geq-1$. Indeed $\reg(Y)=2\N\neq -1+\N$. On the other hand, the Hilbert series of $S/I(Y')$ has degree $5$ as a rational function and $H_{Y'}(\aa)$ stabilizes after $5$, i.e. $\reg(Y')=6+\N$. The following generalizes this to weighted projective spaces under a mild condition which holds true in the most interesting case of subschemes lying inside the torus if at least one weight is trivial, e.g. $w_1=1$, see Lemma \ref{lemdiv}.

\begin{pro} \label{wps} Let $X=P(w_1,\dots,w_{n+1})$ be a weighted projective space and $Y$ be a subscheme such that $S/I(Y)$ has a non-zerodivisor of degree $1$. Then, there is an integer $a_Y$ satisfying $\reg(Y)=1+a_Y+\N$. Moreover, $a_Y$ equals the degree of the rational function corresponding to the Hilbert series of $S/I(Y)$, that is,
$$a_Y=\deg(p_{S/I(Y)})-\bb_1-\cdots-\bb_{n+1}.$$ 
\end{pro}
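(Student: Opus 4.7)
The plan is to first upgrade the non-zerodivisor hypothesis into a monotonicity statement for $H_Y$, then combine with boundedness to produce a stabilization threshold, and finally identify that threshold with the degree of the rational function $HS_{S/I(Y)}$.

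First, the existence of a non-zerodivisor of degree $1$ forces $1 \in \N\bb$, which in turn gives $\N\bb = \N$. Applying Lemma~\ref{lemHF} with $\aa_0 = 1$ therefore yields $H_Y(k) \leq H_Y(k+1)$ for every $k \in \N$. Combined with the bound $H_Y(k) \leq \deg(Y)$ from Theorem~\ref{genHF}(\textbf{iii}) and the nonemptiness of $\reg(Y)$ (guaranteed by \cite[Proposition 6.7]{MacS4}), the integer sequence $H_Y$ is nondecreasing, bounded above by $\deg(Y)$, and actually attains this bound. Setting
$$a_Y := \min\{k \in \N : H_Y(k) = \deg(Y)\} - 1,$$
the monotonicity immediately gives $\reg(Y) = 1 + a_Y + \N$.

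For the formula, I would compute $(1-\t)\, HS_{S/I(Y)}(\t) = \sum_{k} \bigl(H_Y(k)-H_Y(k-1)\bigr)\t^k$. The coefficients vanish for $k < 0$ (since $H_Y(k) = 0$ there) and for $k \geq a_Y + 2$ (by stabilization), while the coefficient at $k = a_Y + 1$ equals $\deg(Y) - H_Y(a_Y) > 0$ by the minimality built into the definition of $a_Y$. Hence $(1-\t)\, HS_{S/I(Y)}(\t)$ is a polynomial of degree exactly $a_Y + 1$, so $HS_{S/I(Y)}(\t)$ is a rational function of degree $a_Y$. Comparing with the representation
$$HS_{S/I(Y)}(\t) = \frac{p_{S/I(Y)}(\t)}{\prod_{i=1}^{n+1}(1-\t^{\bb_i})},$$
whose degree as a rational function is $\deg(p_{S/I(Y)}) - (\bb_1 + \cdots + \bb_{n+1})$, yields the asserted formula for $a_Y$.

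The main subtlety I would check carefully is the edge case $a_Y = -1$ (when $Y = \emptyset$ or $H_Y(0) = \deg(Y)$): in that case $(1-\t)\, HS_{S/I(Y)}(\t)$ is the constant $\deg(Y)$, the rational-function degree equals $-1 = a_Y$, and both sides of the formula still match. Otherwise the argument is routine bookkeeping, and no serious obstacle is expected beyond ensuring that the hypothesis on the existence of a degree-one non-zerodivisor is used precisely to guarantee global monotonicity rather than monotonicity only inside a sub-semigroup.
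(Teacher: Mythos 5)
Your argument is correct and follows the paper's proof in essence: the first claim is obtained exactly as in the paper (nonemptiness of $\reg(Y)$ from Maclagan--Smith, monotonicity from Lemma~\ref{lemHF} applied with the degree-one non-zerodivisor, and the upper bound from Theorem~\ref{genHF}(\textbf{iii})), with the minor difference that you correctly make explicit why $1\in\N\bb$ so the lemma applies on all of $\N$. For the formula, the paper simply defers to the proof of Proposition~2.1 of Duursma--Renter\'ia--Tapia-Recillas, and your $(1-\t)\,HS_{S/I(Y)}(\t)$ telescoping computation, including the check that the top coefficient $\deg(Y)-H_Y(a_Y)$ is nonzero and the $a_Y=-1$ edge case, is precisely that argument carried out in detail.
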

\begin{proof} By \cite[Proposition 4.4]{MacS4}, the set $\reg(Y)\subseteq \N$ is not empty. Let $a_Y$ be the integer such that $1+a_Y$ is the smallest element in this set. The assumption together with Lemma \ref{lemHF} and Theorem \ref{genHF} {\bf(iii)}  implies that $H_Y(\aa)\leq H_Y(1+\aa) \leq \deg(Y)$ for all $\alpha\in\N$.
 Since $H_Y(1+a_Y)=\deg(Y)$, the first claim follows by taking $\aa=1+a_Y$ to start with.

 The second part can be done by adopting carefully the proof of Proposition $2.1$ in \cite{DRT} and replacing $|Y|$ with $\deg(Y)$.
\end{proof}

\subsection{Multigraded Hilbert functions of complete intersections} 
In the rest of the section $Y$ is a complete intersection subscheme of a simplicial toric variety $X$ as in \rd{CI} which is cut out by hypersurfaces of degrees $\aa_1, \dots,\aa_n$. In this case, we have the following nice combinatorial formula for $H_Y(\aa)$.

\begin{pro}\label{P:HilbFla}  Let $Y$ be a complete intersection subscheme of $X$ such that $I(Y)$ is generated by $n$ homogeneous polynomials with degrees $\aa_1,\dots,\aa_n$. 
Then for any $\aa\in\cA$ we have
$$
H_Y(\aa)=\sum_{s=0}^{n}\,\, (-1)^s\!\!\! \sum_{I\subseteq [n], |I|=s} |P_{\aa-\aa_I} \cap M |,
$$
where $\aa_I=\sum_{i\in I}\aa_i$.
\end{pro}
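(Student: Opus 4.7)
The plan is to use the Koszul resolution directly. By \rd{CI}, since $Y$ is a complete intersection cut out by $n$ homogeneous polynomials $F_1,\dots,F_n$ of degrees $\aa_1,\dots,\aa_n$, these polynomials form a regular sequence in $S$. Hence the Koszul complex $K_\bullet(F_1,\dots,F_n)$ furnishes a graded free resolution of $S/I(Y)$ of the shape
$$
0\to K_n\to K_{n-1}\to\cdots\to K_1\to K_0\to S/I(Y)\to 0,
$$
where
$$
K_s=\bigoplus_{I\subseteq[n],\,|I|=s} S(-\aa_I),\qquad \aa_I=\sum_{i\in I}\aa_i,
$$
and $K_0=S$. (If one prefers, this also follows from part (2) of \rt{Koszul} together with the definition of a complete intersection, which already ensures regularity of the sequence.)

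Next I would restrict the resolution to the multigraded piece in degree $\aa\in\cA$. Since each $K_s$ is a direct sum of shifts of $S$, we have $(K_s)_\aa=\bigoplus_{|I|=s} S_{\aa-\aa_I}$. Exactness of the resolution in this finite-dimensional strand yields the Euler-characteristic identity
$$
H_Y(\aa)=\dim_{\K}(S/I(Y))_\aa=\sum_{s=0}^n(-1)^s\sum_{I\subseteq[n],\,|I|=s}\dim_{\K}S_{\aa-\aa_I}.
$$

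Finally, I would apply equation \re{dimension}, which gives $\dim_{\K}S_\gamma=|P_\gamma\cap M|$ for every $\gamma\in\cA$. Substituting $\gamma=\aa-\aa_I$ produces exactly the claimed formula. The only thing that requires a brief comment is that when $\aa-\aa_I\notin\N\bb$ the term $S_{\aa-\aa_I}$ is zero, which matches the fact that $P_{\aa-\aa_I}$ contains no lattice points in that case, so the combinatorial formula remains valid term by term. There is no serious obstacle here: once one has the Koszul resolution, the statement is essentially a bookkeeping computation in each multidegree, so the only point of care is to invoke \rd{CI} (via \rt{Koszul}) to justify using the Koszul complex as a \emph{resolution} of $S/I(Y)$, not merely a complex.
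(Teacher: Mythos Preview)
Your proof is correct and follows essentially the same route as the paper: invoke the definition of complete intersection to get that $F_1,\dots,F_n$ is a regular sequence, use the Koszul complex as a graded free resolution of $S/I(Y)$, take the degree-$\aa$ strand, and apply the Euler-characteristic identity together with \re{dimension}. Your additional remark on the case $\aa-\aa_I\notin\N\bb$ is a nice clarification but not strictly needed, since \re{dimension} already covers it.
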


\begin{proof}
Since $I(Y)$ is a complete intersection, its minimal free resolution is given by the Koszul complex so that we have the following exact sequence 
$$\dis { 0  \rightarrow W_n \rightarrow \cdots \rightarrow  W_s  \rightarrow \cdots \rightarrow  W_1 \rightarrow S_{\aa}\rightarrow (S/I(Y))_{\aa}\rightarrow 0 },$$
where, for every $s=1,\dots,n$, the vector space $W_s$ is given by 
$$W_s=\bigoplus_{I\subseteq [n], |I|=s} S(-\aa_{I})_{\aa}=\bigoplus_{I\subseteq [n], |I|=s} S_{\aa-\aa_{I}}.$$
Therefore, combining this with \re{dimension} we obtain:
 \begin{eqnarray}\label{e:HF} 
 H_Y(\aa)&=&\dim_{\K}S_{\aa}+\sum_{s=1}^{n} (-1)^s \dim_{\K} W_s\nonumber\\
&=& \sum_{s=0}^{n} (-1)^s \sum_{I\subseteq [n], |I|=s} |P_{\aa-\aa_I} \cap M |.
 \end{eqnarray}
\end{proof}


\begin{rema} When $Y$ is a complete intersection as above, by Proposition 8.23 in \cite{MS}, we have $p_{S/I(Y)}=\sum_{s=0}^{n} (-1)^s \sum_{I\subseteq [n], |I|=s} \t^{\aa_I}$, where $\aa_I=\sum_{i\in I}\aa_i$. Hence, if $X=\Pp^n$, then $p_{S/I(Y)}$ has degree $\aa_1+\cdots+\aa_n$, and thus $a_Y$
coincides with what is called the {\it critical degree} $a_Y=\aa_1+\cdots+\aa_n-(n+1)$. This formula can be extended to the case of weighted projective spaces as $a_Y=\aa_1+\cdots+\aa_n-\bb_1-\cdots-\bb_{n+1}$ whenever the assumptions of  Proposition \ref{wps} are satisfied. The question of whether $\aa_1+\cdots+\aa_n-\bb_1-\cdots-\bb_{r}+\N\bb \subseteq \reg(Y)$, for any simplicial toric variety $X$ has an immediate negative answer provided by the following example as the critical degree $\aa_1+\aa_2-\bb_1-\cdots-\bb_4=(4,0)$ does not give a lower bound for the multigraded regularity. 
\end{rema}
\begin{ex} \label{critical} Let $X=\cl H_{2}$ be the Hirzebruch surface and $J=\la F_1, F_2 \ra$, where $F_1=x^3z-x^2z^2-2xz^3+z^4$ and $F_2=x^{4}y^2-w^2$. It is easy to see that $J$ is $B$-saturated and defines a zero-dimensional  complete intersection not lying in $\T^2$, 
$V_X(J)=\{[1:1:0:\pm1],[1:1:\zeta_1:\pm1],[1:1:\zeta_2:\pm1],[1:1:\zeta_3:\pm1]\},$
 where $\zeta_i$ are three distinct solutions of $\zeta^3-2\zeta^2-\zeta+1=0$ in $\K$. One can see from the following matrix that $(3,1)+\N \bb=\reg(Y)$, where the value of the Hilbert function at the origin is in red and $\reg(Y)$ is in blue.
 $$\small\begin{bmatrix}
      0\  \  0\  \  0\  \  0\  \  0\  \  0\  \  1\  \  2\  \  4\  \ 6\  \ 7\  \  \textcolor{blue}{8\  \ 8\  \ 8\  \ 8\  \ 8\  \ 8\  \ 8\  \ 8\  \ 8\  \ 8}\\
      0\  \  0\  \  0\  \  0\  \  0\  \  0\  \  0\  \  0\  \  1\  \  2\  \  4\  \ 6\  \ 7\  \textcolor{blue}{\ 8\  \ 8\  \ 8\  \ 8\  \ 8\  \ 8\  \ 8\  \ 8}\\
      0\  \  0\  \  0\  \  0\  \  0\  \  0\  \  0\  \  0\  \  0\  \  0\  \  \textcolor{red}{1}\  \  2\  \ 3\  \ 4\  \ 4\  \ 4\  \ 4\  \ 4\  \ 4\  \ 4\  \ 4
 \end{bmatrix}.$$
\end{ex}
Instead, we have the following lower bound for the multigraded regularity of complete intersections of semi-ample hypersurfaces.

\begin{tm} \label{T:bound} 
Let $Y$ be a complete intersection subscheme of $X$ such that $I(Y)$ is generated by $n$ homogeneous polynomials with semi-ample degrees $\aa_1,\dots,\aa_n$. If there is a non-zerodivisor in $S/I(Y)$ of degree $\bb_j$, for each $~j \in [r]$, then
 $$\aa_1+\cdots+\aa_n+\N\bb \subseteq \reg(Y).$$
\end{tm}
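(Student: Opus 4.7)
The strategy is two-fold. First I would establish the identity $H_Y(\aa_{[n]}) = \deg(Y)$ at the distinguished degree $\aa_{[n]} = \aa_1 + \cdots + \aa_n$ via a cohomological calculation, and then propagate the equality to every $\aa \succeq \aa_{[n]}$ using the monotonicity of Lemma \ref{lemHF} together with the universal upper bound of Theorem \ref{genHF}(\textbf{iii}).

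For the base case, the key observation is that for every $I \subseteq [n]$, the degree $\aa_{[n]} - \aa_I = \sum_{i \notin I} \aa_i$ is a sum of semi-ample classes, hence itself semi-ample, because $\cl K$ is a subsemigroup of $\N\bb$. Demazure vanishing for nef classes on a complete toric variety therefore yields $H^q(X, \cO_X(\aa_{[n]} - \aa_I)) = 0$ for all $q \geq 1$, and combined with \re{span} and \re{dimension} we obtain $\chi(\cO_X(\aa_{[n]} - \aa_I)) = |P_{\aa_{[n]} - \aa_I} \cap M|$. By Theorem \ref{Koszul}, the Koszul complex on $F_1, \dots, F_n$ is a minimal free resolution of $S/I(Y)$; sheafifying (using that $I(Y)$ is $B$-saturated so that $\widetilde{S/I(Y)} = \cO_Y$) gives a locally free resolution of $\cO_Y$ on $X$. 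Twisting by $\cO_X(\aa_{[n]})$ and applying additivity of Euler characteristics on exact sequences of coherent sheaves, we find
$$
\chi(\cO_Y(\aa_{[n]})) = \sum_{s=0}^n (-1)^s \sum_{|I|=s} \chi(\cO_X(\aa_{[n]} - \aa_I)) = H_Y(\aa_{[n]}),
$$
where the last equality is Proposition \ref{P:HilbFla}. Since $Y$ is $0$-dimensional, every line bundle on $Y$ is isomorphic to $\cO_Y$ (the Picard group of an Artinian scheme is trivial) and higher cohomology vanishes, so $\chi(\cO_Y(\aa_{[n]})) = \dim_\K \Gamma(Y, \cO_Y) = \deg(Y)$. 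This forces $H_Y(\aa_{[n]}) = \deg(Y)$.

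For the general case, any $\aa \succeq \aa_{[n]}$ has the form $\aa = \aa_{[n]} + \sum_{j=1}^r \mu_j \bb_j$ with $\mu_j \in \N$. Iterating Lemma \ref{lemHF} with the non-zerodivisors of degrees $\bb_1, \dots, \bb_r$ yields $\deg(Y) = H_Y(\aa_{[n]}) \leq H_Y(\aa)$; combined with the upper bound $H_Y(\aa) \leq \deg(Y)$ from Theorem \ref{genHF}(\textbf{iii}), we conclude $H_Y(\aa) = \deg(Y)$, i.e. $\aa \in \reg(Y)$.

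The main obstacle is the cohomological step: one must carefully verify that Demazure vanishing applies to each summand $\cO_X(\aa_{[n]} - \aa_I)$ (which hinges on the semigroup property of $\cl K$) and that the sheafified Koszul complex genuinely resolves $\cO_Y$ (which hinges on the regular sequence and $B$-saturatedness furnished by Theorem \ref{Koszul}). Once these two points are in place, the monotonicity step is immediate from the hypothesis on non-zerodivisors.
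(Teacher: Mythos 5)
Your proposal is correct, but the key step is argued by a genuinely different route than the paper's. For the base value $H_Y(\aa_1+\cdots+\aa_n)=\deg(Y)$ the paper stays combinatorial: it identifies $\deg(Y)$ with the intersection number $(E_1,\dots,E_n)=n!V(P_{\aa_1},\dots,P_{\aa_n})$ via \cite[Sec.~5.5]{F}, and then shows that the alternating sum of lattice-point counts from \rp{HilbFla} equals this mixed volume by expanding the Ehrhart polynomial of $z_1P_{\aa_1}+\cdots+z_{n+1}P_{\aa_{n+1}}$ (Postnikov's volume formula) and taking an $n$-th discrete derivative; this actually proves $H_Y(\aa)=\deg(Y)$ for every $\aa$ such that $\aa-(\aa_1+\cdots+\aa_n)$ is semi-ample, and the remaining degrees are handled by sandwiching between $\aa_1+\cdots+\aa_n$ and a larger semi-ample degree $\aa'$, so the paper never needs the a priori cap $H_Y\leq\deg(Y)$. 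You instead sheafify the Koszul resolution, invoke Demazure vanishing for the nef classes $\aa_{[n]}-\aa_I$ to convert Euler characteristics into lattice-point counts, and use triviality of line bundles on a $0$-dimensional scheme to get $\chi(\cO_Y(\aa_{[n]}))=\deg(Y)$; this is shorter, avoids mixed volumes and Ehrhart theory altogether, and your monotonicity-plus-cap argument handles all $\aa\succeq\aa_1+\cdots+\aa_n$ uniformly without the paper's case distinction. The trade-offs: the paper's route yields the Bernstein--Kushnirenko-type identity $\deg(Y)=n!V(P_{\aa_1},\dots,P_{\aa_n})$ as a by-product, while yours imports more machinery (exactness of Cox's sheafification on a simplicial $X$, Demazure vanishing over an arbitrary algebraically closed field -- both standard) and leans on Theorem~\ref{genHF}(\textbf{iii}) for the upper bound, which the paper states only for \emph{reduced} subschemes; since \rt{bound} allows non-reduced complete intersections, you should either cite the bound of \cite[Proposition 6.7]{MacS4} directly or replace the cap by the paper's trick of comparing with a semi-ample degree $\aa'\succeq\aa$. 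This is a presentational fix, not a gap in the mathematics.
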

\begin{proof} Let $F_1,\dots,F_n$ be the generators of $I(Y)$. Their zero loci define divisors
$E_1,\dots,E_n$ on $X$.
Note that $\deg Y$ equals the intersection number $(E_1,\dots,E_n)$. On the other hand,
each $E_i$ is linearly equivalent to a $\T$-invariant divisor $H_i$ of degree $\aa_i$.
By \cite[Sec. 5.5]{F} we have
$$(E_1,\dots,E_n)=(H_1,\dots,H_n)=n!V(P_{\aa_1},\dots,P_{\aa_n}).$$
Therefore, it is sufficient to prove that $H_{Y}(\aa)=n!V(P_{\aa_1},\dots,P_{\aa_n})$, whenever the sum 
$ \aa_1+\cdots+\aa_n \preceq \aa$. We handle this  in two steps. 

Case (1): $\aa_{n+1}:=\aa - (\aa_1+\cdots+\aa_n) $ is semi-ample. 

For notational convenience, set $P_i=P_{\aa_i}$, for $i=1,\dots,n+1$. As before let 
$P_I=\sum_{i\in I}P_{i}$ and $\aa_I=\sum_{i\in I}\aa_{i}$ for a subset $I\subseteq [n]$.

By \cite[Proposition 9.8]{post}, we have the following formula for the $n$-dimensional volume of $z_1P_1+\cdots+z_{n+1}P_{n+1}$
$$\Vol_n(z_1P_1+\cdots+z_{n+1}P_{n+1})=\sum_{(i_1,\dots,i_n)\in [n+1]^n} V(P_{i_1},\dots,P_{i_n})z_{i_1}\cdots z_{i_n},$$
where $V(P_{i_1},\dots,P_{i_n})$ is the mixed volume of the polytopes $P_{i_1},\dots,P_{i_n}$.
Furthermore, let $\Ehr(z_1,\dots,z_{n+1})$ be the Ehrhart polynomial that counts the number of lattice points in the Minkowski sum 
$z_1P_1+\cdots+z_{n+1}P_{n+1}$ for non-negative integers $z_1,\dots, z_{n+1}$. 
Asymptotically, as $z_i\to\infty$, the number of lattice points equals the volume, so we have
$$\Ehr(z_1,\dots,z_{n+1})=\Vol_n(z_1P_1+\cdots+z_{n+1}P_{n+1})+\mbox{lower order terms in}\; z_1,\dots,z_{n+1}.$$ 

Now consider the functions $V(z_1,\dots,z_n)=\Vol_n(z_1P_1+\cdots+z_nP_n+P_{n+1})$ and $E(z_1,\dots,z_n)=\Ehr(z_1,\dots,z_n,1)$.
By above the coefficient of $z_1\cdots z_n$ in $V$ is $n!V(P_1,\dots,P_n)$. Thus, we obtain
\begin{eqnarray}\label{e:MV1} 
\p^n V/\p z_1\cdots\p z_n=n!V(P_1,\dots,P_n).
\end{eqnarray}
On the other hand,  since $E$ is a polynomial of degree $n$ its partial derivative coincides with its discrete derivative when the order is $n$, i.e. $\p^n E/\p z_1\cdots\p z_n=\D^n E/ \D z_1\cdots\D z_n$ at $(z_1,\dots,z_n)=(0,\dots,0)$ which is nothing but
\begin{eqnarray}\label{e:MV} 
\frac{\D^nE}{\D z_1\cdots\D z_n}(0,\dots,0)=\sum_{s=0}^{n} (-1)^s \sum_{I\subseteq [n], |I|=s} |(P_{\aa}-P_{I})\cap M|.
\end{eqnarray}
Note that the polytope $P_{\aa}-P_{I}$ is well defined, as by the hypothesis $\aa_1,\dots,\aa_{n+1}$ are all semi-ample and thus $P_{\aa}=P_1+\cdots+P_n+P_{n+1}$, by equation \re{Minksum}. In fact, it follows that $P_{\aa}-P_{I}=P_{\aa-\aa_{I}}$. 

Since we also have $\p^n E/\p z_1\cdots\p z_n=\p^n V/\p z_1\cdots\p z_n$, the combination of \re{MV1} and \re{MV} provides
$$
n!V(P_1,\dots,P_n)=\sum_{s=0}^{n} (-1)^s \sum_{I\subseteq [n], |I|=s} |P_{\aa-\aa_{I}}\cap M|.
$$

Now the statement $H_Y(\aa)=n!V(P_1,\dots,P_n)$, whenever $\aa_1+\cdots+\aa_n \preceq \aa$, follows from \rp{HilbFla}.

In particular,  $H_Y(\aa_1+\dots+\aa_n)=n!V(P_1,\dots,P_n)=\deg Y.$


Case (2): $\aa_{n+1}:=\aa - (\aa_1+\cdots+\aa_n)$ is not semi-ample. 

We claim that there is a semi-ample degree $\aa'_{n+1}$ with $\aa_{n+1} \preceq  \aa'_{n+1}$. 
Indeed, note that $\cl K$  corresponds to the set of the lattice points of a full-dimensional cone $C_{\cl K}$ in $\cA\otimes\R\cong\R^{r-n}$.
Since $C_{\cl K}\cap (\aa_{n+1}+C_{\cl K})$ is unbounded, it must contain a lattice point $\aa_{n+1}'\in \cl K\cap (\aa_{n+1}+\cl K)$. (To see that $C_{\cl K}\cap (\aa_{n+1}+C_{\cl K})$ is unbounded, one
can take any ray pointing to the interior of $C_{\cl K}$ and show that it must eventually intersect the shifted cone  $\aa_{n+1}+C_{\cl K}$.)

Now $\aa':=\aa_1+\cdots+\aa_n+\aa'_{n+1}$ is semi-ample and satisfies $\aa\preceq  \aa'$. Thus, by Case (1) and Theorem \ref{genHF} (ii), we have 
$$\deg Y=H_Y(\aa_1+\cdots+\aa_n) \leq H_Y(\aa) \leq H_Y(\aa')=\deg Y,$$
which completes the proof.
\end{proof}
\begin{rema} When $X$ is smooth and $Y$ is a complete intersection of semi-ample hypersurfaces of degree $\aa_1,\dots,\aa_n$, the lower bound for $\reg(Y)$ provided by the proof of Proposition $2.10$ in \cite{MacS5} is $\aa_1+\cdots +\aa_n+\cl K$. Thus, our bound improves this even for smooth toric varieties.
\end{rema}

Let us finish the section by summarizing the previous results in the special case of $Y\subseteq \T^n$.
\begin{coro} \label{HF} If $Y\subseteq \T^n$ is reduced, then the following hold.\\
(\textbf{i}) If $\aa\not\succeq \aa_i$, for degrees $\aa_i$ of minimal generators of $I(Y)$, then
$$H_Y(\aa)=|P_{\aa}\cap M|.$$\\
(\textbf{ii}) $H_Y(\aa)\leq H_Y(\aa'), \text{ for all }\aa \preceq \aa'$, $\aa \in\N\beta$.\\
(\textbf{iii}) If ~$Y$ is a complete intersection subscheme of $X$ such that $I(Y)$ is generated by homogeneous polynomials with semi-ample degrees $\aa_1,\dots,\aa_n$, then 
$$H_Y(\aa)=|Y|,\text{ for all } \aa \succeq \aa_1+\cdots+\aa_n.$$
\end{coro}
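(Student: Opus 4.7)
The plan is to assemble this corollary from the results already established in the section, using the fact that $Y \subseteq \T^n$ allows us to invoke Lemma \ref{lemdiv} to supply the non-zerodivisor hypotheses required by Theorem \ref{genHF} and Theorem \ref{T:bound}. None of the three parts requires a genuinely new argument; the content of the corollary is that the hypotheses of the general theorems are automatically satisfied in the torus case.

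For part (\textbf{i}), I would argue directly along the lines of the proof of Theorem \ref{genHF}(\textbf{i}). Write $I(Y)=\langle F_1,\dots,F_m\rangle$ with $\deg_\cA(F_i)=\aa_i$. The hypothesis $\aa\not\succeq\aa_i$ says $\aa-\aa_i\notin\N\beta$, so by positivity of the grading $S_{\aa-\aa_i}=\{0\}$ for every $i$. Any element of $I(Y)\cap S_\aa$ has the form $\sum G_iF_i$ with $G_i\in S_{\aa-\aa_i}$, hence is zero. Thus $\dim_\K I(Y)_\aa=0$ and equation \re{dimension} gives $H_Y(\aa)=\dim_\K S_\aa=|P_\aa\cap M|$.

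For part (\textbf{ii}), the plan is to verify the hypothesis of Theorem \ref{genHF}(\textbf{ii}): we must exhibit, for each $j\in[r]$, a non-zerodivisor in $S/I(Y)$ of degree $\beta_j$. But $\beta_j\in\N\beta$ trivially, so Lemma \ref{lemdiv} supplies such a non-zerodivisor (indeed $x_j$ itself works). Then Theorem \ref{genHF}(\textbf{ii}) gives the desired monotonicity.

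For part (\textbf{iii}), the aim is to apply Theorem \ref{T:bound}. Its hypotheses are that $Y$ be a complete intersection cut out by homogeneous polynomials of semi-ample degrees $\aa_1,\dots,\aa_n$ (given) and that $S/I(Y)$ contain a non-zerodivisor of each degree $\beta_j$ (again supplied by Lemma \ref{lemdiv} since $Y\subseteq\T^n$). Theorem \ref{T:bound} then yields $\aa_1+\cdots+\aa_n+\N\beta\subseteq\reg(Y)$, i.e.\ $H_Y(\aa)=\deg(Y)$ for every $\aa\succeq\aa_1+\cdots+\aa_n$. Finally, since $Y$ is reduced we have $\deg(Y)=|Y|$, completing the proof. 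The only subtlety worth flagging is the routine but essential observation that Lemma \ref{lemdiv} applies to every $\beta_j$, so the non-zerodivisor conditions in both Theorem \ref{genHF}(\textbf{ii}) and Theorem \ref{T:bound} are automatic in the torus setting; this is really the point of the corollary.
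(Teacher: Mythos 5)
Your proposal is correct and follows essentially the same route as the paper: Lemma \ref{lemdiv} supplies the non-zerodivisors needed for Theorem \ref{genHF}(\textbf{ii}) and Theorem \ref{T:bound}, and part (\textbf{i}) comes from Theorem \ref{genHF}(\textbf{i}). Your only (harmless, arguably beneficial) deviation is that for (\textbf{i}) you rerun the argument directly from $\aa-\aa_i\notin\N\beta$ instead of citing the theorem, which sidesteps the slight mismatch between the polytope-containment hypothesis there and the $\not\succeq$ hypothesis stated in the corollary.
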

\begin{proof} The first claim is just Theorem \ref{genHF}, part (i).
By Lemma \ref{lemdiv}, there exists a non-zerodivisor in $S/I(Y)$ of degree $\aa$, 
for any $\aa \in \N\bb$. So, the second claim follows from Theorem \ref{genHF} (ii) and the third 
follows from \rt{bound}.
\end{proof}

\section{Evaluation codes on complete intersections} \label{S:codes} 
In this section we apply our results in \rs{mhf} to dimension calculation for
evaluation codes on complete intersections in a toric variety. 

Recall the basic definitions from coding theory. Let $\F_q$ be a finite field of $q$ elements and 
$\F_q^*=\F_q\setminus\{0\}$ its multiplicative group. 
A subspace $\cC$ of $ \F_q^{N}$ is called a \textit{linear code}, and its elements 
${c}=(c_{1},\dotsc,c_{N})$ are called \textit{codewords}.  The number $N$ is called the {\it block-length} of $\cC$.
The {\it weight} of $c$ in $\cC$ is the number of non-zero entries in $c$.
The {\it distance} between two codewords $a$ and $b$ in $\mathcal{C}$ is the weight of $a-b\in\cC$.
The minimum distance between distinct codewords in $\cC$ is the same as the minimum weight of  non-zero codewords in $\cC$. The block-length $N$, the dimension $k=\dim_{\F_q}(\cC)$, and the minimum
distance $d=d(\cC)$ are the basic parameters of $\cC$. 

Let $\K=\overline{\F}_q$ 
be the algebraic closure of $\F_q$. We let $\Phi\in\Gal(\K/\F_q)$ denote the Frobenius  automorphism, so
$\Phi(x)=x^q$ for all $x\in\K$ and the restriction of $\Phi$ to $\F_{q^k}$ generates the cyclic group
$\Gal(\F_{q^k}/\F_q)$ for every $k\in\N$.

Now, let $X$ be a simplicial complete toric variety over $\K$ of dimension $n$ with torsion-free class group and $S$ its 
homogeneous coordinate ring as in the previous sections. Recall that $S_\aa$ 
is spanned by characters of the torus $\T^n$ as in \re{span}. 
The group  $\Gal(\K/\F_q)$ acting on the coefficients in the direct sum defines an action on $S_\aa$.
Thus the subset $S_\aa^\Phi$ of $S_\aa$ of elements  invariant
under the Frobenius automorphism consists of the $\F_q$-linear combinations of the characters:
\begin{equation}\label{e:Frob}
S_\aa^\Phi\cong \bigoplus_{\m\in P_{\aa}\cap M}\F_q\chi^\m.
\end{equation}
Note that the dimension of the $\F_q$-vector space $S_\aa^\Phi$ and the dimension
of the $\K$-vector space $S_\aa$ are the same.
 
Let $Y=\{p_1,\dots, p_N\}$ be a zero dimensional reduced subscheme of $X$,
contained in $(\F_q^*)^n$. Fix a degree $\aa\in\N\beta$ and a monomial $F_0=\x^{\phi(\m_0)+\a} \in S_{\aa}$,
where  $\m_0\in M$, $\a$ is any element of $\Z^r$ with $\deg(\a)=\aa$, and $\phi$ as in the exact sequence $\mathfrak{P}$.
This defines the {\it evaluation map}
\begin{equation}\label{e:evalmap}
\ev_{Y}:S^\Phi_\aa\to \F_q^N,\quad F\mapsto \left(\frac{F(p_1)}{F_0(p_1)},\dots,\frac{F(p_N)}{F_0(p_N)}\right).
\end{equation}
The image $\cC_{\aa,Y}=\text{ev}_{Y}(S^\Phi_\aa)$ is a linear code, called {\it evaluation code} on $Y$. 
It can be readily seen that different choices of $F_0\in S_\aa$ yield to equivalent codes. Clearly, the block-length of $\cC_{\aa,Y}$ equals $N=|Y|$.

In the special case of $Y=(\F_q^*)^n$ this construction produces {\it toric codes}
which was introduced for the first time by Hansen in \cite{Ha0, Ha1}.  A way to compute the dimension of a toric code is given in \cite{Ru}. For general information about algebraic geometry codes we refer to \cite{TV} and 
\cite{Li}.

\begin{rema}\label{R:two-defs} We remark that the toric complete intersection code $\cC_{\cL(P), Y}$
 studied in \cite{sop} is also a particular case of the above evaluation code. Indeed, in \cite{sop} the code $\cC_{\cL(P), Y}$  is defined as the image of
$$\dis \text{ev}_{Y}:\cL(P) \to \F_q^N,\quad f \mapsto  (f(p_1),\dots,f(p_N)),$$
where $\cL(P)$ is the subspace of the Laurent polynomial ring $\F_q[t_1^{\pm 1},\dots,t_n^{\pm 1}]$ spanned by the monomials corresponding to the lattice points of $P$, and $Y=\{p_1,\dots, p_N\}\subset (\F_q^*)^n$
is the set of common zeroes of a Laurent polynomial system $f_1=\dots=f_n=0$ which
meets the Bernstein--Kushnirenko bound.  There is a standard procedure of homogenizing a Laurent polynomial $f$ with respect to its Newton polytope $P$, which produces a homogeneous polynomial $F\in S$ of semi-ample degree $\aa$ with $P_\aa=P$ (see  \cite[p. 124]{CaD}). In this case, $f=F/F_0$ for
a certain monomial $F_0\in S_\aa$. It is not hard to show that under the conditions imposed on $Y$
in \cite{sop}, the homogenizations $F_1,\dots, F_n$ of the $f_i$ generate an ideal $I\subset S$
defining $Y$, i.e.  $V_X(I)=Y$. On the other hand,  for any semi-ample degree $\aa$ the space $\cL(P_\aa)$
is identified with $S^\Phi_\aa$, by \re{Frob}. Therefore, the toric complete intersection code 
$\cC_{\cL(P), Y}$ coincides with the evaluation code $\cC_{\aa,Y}$ as defined above.
\end{rema}

The advantage of the above observation is that when $Y \subseteq (\F_q^*)^n$ it helps us to see why there is no harm in assuming without loss of generality that $X$ is smooth. This is because when $X$ is singular, one can resolve its singularities by refining the fan which changes the homogeneous coordinate ring $S$ and thereby $S_\aa$ but does not effect the code because $\cL(P_\aa)$ does not change. 


 The next proposition provides a way to calculate the dimension of the code as the
 value of the Hilbert function $H_Y(\aa)$. 
 
  \begin{pro}\label{P:CodeHF}  Let $Y \subseteq (\F_q^*)^n$ be a reduced closed subscheme of $X$. Then, for any $\aa\in\cA$ the dimension of
$\cC_{\aa,Y}$ equals $H_Y(\aa)$.
\end{pro}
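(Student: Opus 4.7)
The plan is to identify the kernel of $\ev_Y$ with the space of $\F_q$-rational degree-$\aa$ elements of $I(Y)$, and then apply Galois descent to convert this $\F_q$-dimension into an honest multigraded Hilbert function value.

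First, I would nail down the kernel. Pick lifts $\tilde p_i\in \T^r$ of the points $p_i\in\T^n\subseteq X$. Because $F$ and $F_0$ have the same multidegree $\aa$, the ratio $F(\tilde p_i)/F_0(\tilde p_i)$ is independent of the choice of lift, and it is the same value that appears in the definition of $\ev_Y$. Since $p_i\in(\F_q^*)^n$ one can choose $\tilde p_i\in(\F_q^*)^r$, so the monomial $F_0$ satisfies $F_0(\tilde p_i)\in\F_q^*$. Consequently $F(\tilde p_i)/F_0(\tilde p_i)=0$ if and only if $F$ vanishes at $p_i$ in $X$. Thus
\[
\ker\ev_Y \;=\; \{F\in S^\Phi_\aa : F \text{ vanishes on every point of } Y\}.
\]
By \rp{ideal-scheme}, $I(Y)$ is the unique radical $B$-saturated homogeneous ideal defining $Y$, and since $Y$ is reduced the right-hand side is exactly $I(Y)_\aa\cap S^\Phi_\aa$, which I denote $I(Y)^\Phi_\aa$.

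Next I would invoke Galois descent. The identification $S_\aa \cong \bigoplus_{\m\in P_\aa\cap M}\K\chi^\m$ combined with \re{Frob} yields $S_\aa = S^\Phi_\aa \otimes_{\F_q}\K$. Since every point $p_i$ has coordinates in $\F_q^*$, each $p_i$ is fixed by $\Gal(\K/\F_q)$, so the ideal $I(Y)$ is Galois-stable, and in particular $I(Y)_\aa\subset S_\aa$ is a Galois-stable $\K$-subspace. Standard Galois descent then gives
\[
\dim_\K I(Y)_\aa \;=\; \dim_{\F_q} I(Y)^\Phi_\aa, \qquad \dim_\K S_\aa \;=\; \dim_{\F_q} S^\Phi_\aa.
\]

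Finally, the rank-nullity theorem for $\ev_Y$ over $\F_q$ combined with the two equalities above yields
\[
\dim_{\F_q}\cC_{\aa,Y} \;=\; \dim_{\F_q} S^\Phi_\aa - \dim_{\F_q} I(Y)^\Phi_\aa \;=\; \dim_\K S_\aa - \dim_\K I(Y)_\aa \;=\; H_Y(\aa).
\]
The only subtle step is the kernel identification, where one must carefully verify that the division by $F_0$ renders $F(p_i)/F_0(p_i)$ well defined on homogeneous coordinates and that non-vanishing of $F_0$ on $Y$ (guaranteed by $Y\subseteq(\F_q^*)^n$ and $F_0$ being a monomial) lets the kernel be described purely in terms of zeros of $F$; everything else is either the definition of $I(Y)$ or routine Galois descent.
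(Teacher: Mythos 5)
Your proof is correct, and it shares the paper's backbone: both arguments identify the degree-$\aa$ forms vanishing on $Y$ with $I(Y)_\aa$ (via radicality and the uniqueness in \rp{ideal-scheme}) and then apply rank--nullity. Where you genuinely diverge is in how the $\F_q$-dimension is compared with the $\K$-dimension. The paper extends $\ev_Y$ to a $\K$-linear map on $S_\aa$, gets $\dim_\K \ev_Y(S_\aa)=H_Y(\aa)$ from $\ker(\ev_Y)=I(Y)_\aa$, and then simply observes that in the monomial basis indexed by $P_\aa\cap M$ the evaluation matrix has entries in $\F_q$ (because the points lie in $(\F_q^*)^n$), so its rank over $\K$ equals its rank over $\F_q$; no descent is needed. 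You instead stay on the $\F_q$-side, identify $\ker(\ev_Y)$ with $I(Y)_\aa\cap S^\Phi_\aa$, and invoke Galois descent for the Galois-stable subspace $I(Y)_\aa\subset S_\aa=S^\Phi_\aa\otimes_{\F_q}\K$ to conclude $\dim_{\F_q}\bigl(I(Y)_\aa\cap S^\Phi_\aa\bigr)=\dim_\K I(Y)_\aa$. This works, but it carries two small obligations the paper's route avoids: Galois-stability of $I(Y)_\aa$ deserves slightly more than ``the $p_i$ are Galois-fixed'' --- one should note that each fiber $G\cdot\tilde p_i$ is stable under Frobenius and its inverse, since $G$ is cut out by monomial equations, so conjugating a form preserves vanishing on the preimage of $Y$; and your choice of lifts $\tilde p_i\in(\F_q^*)^r$ implicitly uses that $\cA$ is torsion-free (so $\mathfrak{P}$ splits and $\pi$ admits a monomial section defined over $\F_q$), although for the kernel identification any lift in $\T^r$ suffices because $F_0$ is a monomial and hence nonvanishing there. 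Net effect: same skeleton, with the field comparison done on the kernel via descent rather than on the image via the rank of an $\F_q$-matrix; the paper's version is a bit more elementary, yours is a touch more conceptual.
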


\begin{proof} Let $\ev_Y: S_\aa\to\K^N$ be the $\K$-linear evaluation map extending 
the one in~\re{evalmap}. Since
$I(Y)$ is radical, we have $\ker(\ev_{Y})= I(Y)_{\aa}$, which yields
$$\dim_{\K}(\ev_{Y}(S_\aa))=\dim_{\K}(S_\aa)-\dim_{\K} \ker(\text{ev}_{Y})=H_Y(\aa).$$

On the other hand, $\dim_{\K}(\ev_{Y}(S_\aa))=\dim_{\F_q}(\ev_{Y}(S_\aa^\Phi))$ as
both maps are represented by the same matrix with entries in $\F_q$
in the monomial basis corresponding to the lattice points of $P_\aa$.
By definition $\dim_{\F_q}(\ev_{Y}(S_\aa^\Phi))$ is the dimension of the code $\cC_{\aa,Y}$,
which completes the proof of the proposition. 
\end{proof}
The behavior of the multigraded Hilbert function gives information about equivalence of the corresponding codes as we discuss now.
\begin{pro} \label{finitecodes} Let $Y \subseteq (\F_q^*)^n$ be a reduced closed subscheme of $X$ as before. If $H_Y(\aa)=H_Y(\aa+\aa_0)$ then the codes $\cC_{\aa,Y}$ and $\cC_{\aa+\aa_0,Y}$ are equivalent. Therefore, there are only finitely many non-equivalent codes $\cC_{\aa,Y}$.
\end{pro}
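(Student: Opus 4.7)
The plan is to reduce the first claim to the easier statement that $\cC_{\aa,Y}$ is \emph{contained} in $\cC_{\aa+\aa_0,Y}$, and then promote the containment to equality by a dimension count via \rp{CodeHF}. Since the code $\cC_{\aa+\aa_0,Y}$ is only defined up to equivalence by the choice of a monomial $F_0'\in S_{\aa+\aa_0}$, I will exploit this freedom: fix the monomial $F_0\in S_{\aa}$ used to define $\cC_{\aa,Y}$, pick any monomial $G_0\in S_{\aa_0}$ (which exists since $\aa_0\in\N\bb$), and use $F_0' := F_0 G_0 \in S_{\aa+\aa_0}$ as the monomial defining $\cC_{\aa+\aa_0,Y}$.

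Next, I will show that multiplication by $G_0$ defines an $\F_q$-linear injection $S_{\aa}^{\Phi}\hookrightarrow S_{\aa+\aa_0}^{\Phi}$ that intertwines the two evaluation maps, yielding $\cC_{\aa,Y}\subseteq \cC_{\aa+\aa_0,Y}$. Injectivity is immediate because $S$ is a domain. The key computation, for any $F\in S_\aa^\Phi$ and $p_i\in Y$, is
$$\ev_Y(FG_0)_i = \frac{F(p_i)\,G_0(p_i)}{F_0(p_i)\,G_0(p_i)} = \ev_Y(F)_i.$$
The cancellation requires $G_0(p_i)\neq 0$, and this is where the hypothesis $Y\subseteq(\F_q^*)^n$ enters: any lift of $p_i$ under $\pi$ to $(\K^*)^r$ has all coordinates non-zero, so every monomial in $x_1,\dots,x_r$ is non-zero at such a lift. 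By \rp{CodeHF}, the two codes have dimensions $H_Y(\aa)$ and $H_Y(\aa+\aa_0)$, which agree by hypothesis, so the containment is actually an equality and in particular the codes are equivalent.

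For the finiteness claim, I would use the elementary observation that every $\cC_{\aa,Y}$ is an $\F_q$-linear subspace of the finite set $\F_q^N$. There are only finitely many subspaces of a finite vector space, hence the family $\{\cC_{\aa,Y}:\aa\in\cA\}$ consists of finitely many distinct codes and, a fortiori, of finitely many equivalence classes.

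The main obstacle is organizational rather than conceptual: one has to line up the two evaluation maps with compatible monomials so that the cancellation between numerators and denominators is transparent, and verify that $Y\subseteq(\F_q^*)^n$ is precisely the hypothesis that makes $F_0(p_i)$ and $G_0(p_i)$ non-vanishing. Once that bookkeeping is in place, the first claim becomes a one-line dimension argument, and the second follows from the crudest possible counting.
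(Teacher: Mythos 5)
Your argument is correct. For the first claim you use the same underlying mechanism as the paper: multiplication by a monomial of degree $\aa_0$ (which is nonvanishing on $Y$ precisely because $Y\subseteq(\F_q^*)^n$ lifts into the torus $(\K^*)^r$), together with the hypothesis $H_Y(\aa)=H_Y(\aa+\aa_0)$ to upgrade an injection to an equality. The paper organizes this at the level of $(S/I(Y))_{\aa}$ via the exact sequence behind Lemma~\ref{lemHF} and concludes that the generator matrix of $\cC_{\aa+\aa_0,Y}$ is a diagonal matrix times that of $\cC_{\aa,Y}$ (monomial equivalence), whereas you normalize with $F_0'=F_0G_0$ so that the two codes become literally equal subspaces of $\F_q^N$ and then invoke \rp{CodeHF} for the dimension count; this is a slightly cleaner bookkeeping of the same idea (note you, like the paper, implicitly assume $\aa_0\in\N\bb$ so that a monomial of that degree exists). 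Where you genuinely diverge is the finiteness claim: you observe that all the codes are subspaces of the fixed finite space $\F_q^N$, so only finitely many distinct codes can occur at all. This is valid and proves the literal statement more cheaply than the paper, which instead combines the first claim with the fact that $H_Y$ is non-decreasing in each direction $\bb_j$ and bounded by $\deg(Y)$, hence eventually constant. The trade-off is informational: your counting argument says nothing about \emph{which} degrees give equivalent codes, while the paper's stabilization argument ties the finiteness to the multigraded regularity of $Y$, which is exactly the structure used later (e.g.\ in Theorem~\ref{T:CodeFla} and Example~\ref{hirci}) to identify the trivial codes and to bound the interesting range of degrees.
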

\begin{proof} As $Y \subseteq (\F_q^*)^n$, the monomial $\x^{\a_0}$ of degree $\aa_0$ is a non-zerodivisor by the proof of Lemma \ref{lemdiv}. Hence, we have the following exact sequence
 $$\dis \xymatrix{  0  \ar[r] & (S/I(Y))_{\aa} \ar[r]^{\ov{\x^{\a_0}}~~~~} & (S/I(Y))_{\aa+\aa_0} \ar[r]^{}& (S/(I+\la \x^{\a_0}\ra))_{\aa+\aa_0} \ar[r]& 0}.$$
When $H_Y(\aa)=H_Y(\aa+\aa_0)$, the injection above becomes an isomorphism and we have 
$$(S/I(Y))_{\aa+\aa_0}=\ov{\x^{\a_0}}\cdot(S/I(Y))_{\aa} .$$

On the other hand,  since the kernel of the evaluation map $\ev_Y: S_\aa\to\K^N$ equals
$I(Y)_{\aa}$ we may identify $(S/I(Y))_{\aa}$ with the code $\cC_{\aa,Y}$. 
Let $\x^{\a_1},\dots,\x^{\a_k}$
be monomials in $S_\aa$ whose classes form a basis for  $(S/I(Y))_{\aa}$ and, hence, whose images under the evaluation map form a basis for $\cC_{\aa,Y}$. By above, the monomials $\x^{\a_0}\x^{\a_1},\dots,\x^{\a_0}\x^{\a_k}$
will yield a basis for $\cC_{\aa+\aa_0,Y}$. In these bases the generating matrix of $\cC_{\aa+\aa_0,Y}$
equals the product of a diagonal matrix (consisting of the values of $\x^{\a_0}$ at the points of $Y$) and
the generating matrix of $\cC_{\aa,Y}$.
Therefore, the two codes are monomially equivalent.

As for the last assertion, we first recall that $H_Y$ is non-decreasing in the direction of $\bb_j$ for each $j\in [r]$. Second, it is bounded above by the number of points in $Y$, so it must eventually become constant, which completes the proof. 
\end{proof}
The following shows how this observation can be turned into a powerful method to produce good codes.
\begin{ex} \label{hirci} Let $X=\cl H_{2}$ be the Hirzebruch surface as in example \ref{hir} over the field $\F_5$. Consider the homogeneous ideal $J=\la F_1, F_2 \ra$, where $F_1=x^{2}-z^{2}$ and $F_2=z^{8}y^4-w^{4}$. One can check that $J$ is $B$-saturated and radical, so $Y=V_X(J)$ is the following reduced union of $8$ points lying inside the torus $(\F_5^{*})^2$:
\begin{eqnarray} &[1:1:1:1],[1:1:1:2],[1:1:1:3],[1:1:1:4],&\nonumber \\
&[1:1:4:1],[1:1:4:2],[1:1:4:3],[1:1:4:4].& \nonumber
\end{eqnarray}

 In this case, the toric complete intersection codes have the following table for their dimensions, where the value of the Hilbert function at the origin is in red, recorded to be $1$ at the bottom line, and the trivial codes correspond to the degrees in $(1,3)+\N \bb$ which is in blue,
  $$\small\begin{bmatrix}
    
      0\  \  0\  \  1\  \  2\  \  3\  \  4\  \  5\  \  6\  \  7\  \  \textcolor{blue}{8\  \ 8\  \ 8\  \ 8\  \ 8\  \ 8\  \ 8\  \ 8\  \ 8\  \ 8\  \ 8\  \ 8}\\
      0\  \  0\  \  0\  \  0\  \  1\  \  2\  \  3\  \  4\  \  5\  \  6\  \  7\  \  \textcolor{blue}{8\  \ 8\  \ 8\  \ 8\  \ 8\  \ 8\  \ 8\  \ 8\  \ 8\  \ 8}\\
      0\  \  0\  \  0\  \  0\  \  0\  \  0\  \  1\  \  2\  \  3\  \  4\  \  5\  \  6\  \  6\  \  6\  \ 6\  \ 6\  \ 6\  \ 6\  \ 6\  \ 6\  \ 6\\
      0\  \  0\  \  0\  \  0\  \  0\  \  0\  \  0\  \  0\  \  1\  \  2\  \ 3\  \ 4\  \ 4\  \ 4\  \ 4\  \ 4\  \ 4\  \ 4\  \ 4\  \ 4\  \ 4 \\
      0\  \  0\  \  0\  \  0\  \  0\  \  0\  \  0\  \  0\  \  0\  \  0\  \  \textcolor{red}{1}\  \  2\  \  2\  \ 2\  \ 2\  \ 2\  \ 2\  \ 2\  \ 2\  \ 2\  \ 2
 \end{bmatrix}.$$
 By Proposition \ref{finitecodes}, there are exactly $8$ non-equivalent codes. Observe now that the monomials $(yz^2)^{b}, (xyz)^{b} \in S_{0,b}$ and $x(yz^2)^{b}, x(xyz)^{b} \in S_{1,b}$. By evaluating these monomials at $Y$, we get the codewords $(1\  \ 1\  \ 1\  \ 1\  \ 1\  \ 1\  \ 1\  \ 1)$ and  $(1\  \ 1\  \ 1\  \ 1\  \ 4\  \ 4\  \ 4\  \ 4)$ which shows that the minimum distance is at most $4$.

According to Markus Grassl's Code Tables \cite{Gra} a best-known code with $N=8$ has $k+d=8$ or $k+d=9$ (MDS codes). So, there is no need to consider codes of degrees $(0,1)$ and $(1,0)$. As the codes of $(0,0)$ and $(1,3)$ are trivially MDS, we look at the codes of degrees $(1,1)$, $(0,2)$, $(1,2)$ and $(0,3)$.
 
 Take $\alpha=(1,1)$ first and consider the toric complete intersection code $\cC_{\aa,Y}$. A basis of $(S/I(Y))_\aa$ is given by the classes of monomials $\{xyz^2, yz^3, xw, zw\}$. So, the generating matrix of the code which is obtained by evaluating these monomials at the $8$ points of $Y$ is as follows:
 $$\small\begin{bmatrix}
    1\  \ 1\  \ 1\  \ 1\  \ 1\  \ 1\  \ 1\  \ 1\\
    1\  \ 1\  \ 1\  \ 1\  \ 4\  \ 4\  \ 4\  \ 4\\
      1\  \ 2\  \ 3\  \ 4\  \ 1\  \ 2\  \ 3\  \ 4\\
      1\  \ 2\  \ 3\  \ 4\  \ 4\  \ 3\  \ 2\  \ 1
 \end{bmatrix}.$$
 It is now easy to check that the minimum distance is $3$ and thus the code $\cC_{\aa,Y}$ has parameters $[8,4,3]_5$, by using Magma, see \cite{magma}. In contrast, the best known code according to \cite{Gra} has parameters $[8,5,3]_5$.

For degree $\alpha=(0,2)$, a basis of $(S/I(Y))_\aa$ is given by the classes of monomials $\{ y^2z^4,xy^2z^3, yz^2w, xyzw,  w^2\}$. 
 In this case, the code $\cC_{\aa,Y}$ has parameters $[8,5,3]_5$ which is a best possible code according to \cite{Gra}. 
 
For degree $\alpha=(1,2)$, a basis of $(S/I(Y))_\aa$ is given by the classes of monomials $\{ y^2z^5,xy^2z^4, yz^3w, xyz^2w,  zw^2,xw^2\}$. 
 In this case, the code $\cC_{\aa,Y}$ has parameters $[8,6,2]_5$ which is a best possible code according to \cite{Gra}.
 
Finally, if $\alpha=(0,3)$, a basis of $(S/I(Y))_\aa$ is given by the classes of monomials $\{ y^3z^6,xy^3z^5, y^2z^4w, xy^2z^3w,  yz^2w^2,xyzw^2,w^3\}$. 
 In this case, the code $\cC_{\aa,Y}$ has parameters $[8,7,2]_5$ which is an MDS code.
 \end{ex}

 As a direct consequence of Proposition \ref{P:CodeHF}, Proposition \ref{P:HilbFla}, and Corollary \ref{HF} 
 we obtain a nice formula for the dimension of the code $\cC_{\aa,Y}$.
 
 \begin{tm}\label{T:CodeFla}  Let $Y \subseteq (\F_q^*)^n$ be a reduced complete intersection subscheme of $X$ such that $I(Y)$ is generated by $n$ homogeneous polynomials with semi-ample degrees $\aa_1,\dots,\aa_n$. Then for any $\aa\in\N\beta$ we have
$$
\dim_{\F_q}(\cC_{\aa,Y})=\sum_{s=0}^{n}\,\, (-1)^s\!\!\! \sum_{I\subseteq [n], |I|=s} |P_{\aa-\aa_I} \cap M |,
$$
where $\aa_I=\sum_{i\in I}\aa_i$.  In particular, if $\aa\not\succeq \aa_i$ for all $1\leq i\leq n$ (i.e. $P_\aa$ does not contain any of $P_{\aa_i}$) then
$$\dim_{\F_q}(\cC_{\aa,Y})=|P_\aa\cap M|.$$
Furthermore, if $\aa \succeq \aa_1+\dots+\aa_n$ then 
the dimension of $\cC_{\aa,Y}$ equals its length $|Y|$ and, hence, the code is trivial. 
\end{tm}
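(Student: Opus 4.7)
The proof is essentially a matter of assembling the three ingredients already developed: Proposition \ref{P:CodeHF}, Proposition \ref{P:HilbFla}, and Corollary \ref{HF}. The plan is to reduce every statement of the theorem to a statement about the multigraded Hilbert function $H_Y(\aa)$, and then invoke the corresponding result from \rs{mhf}. The only conceptual point that needs to be checked is that the hypotheses of those results apply to $Y\subseteq(\F_q^*)^n$, which is immediate because $(\F_q^*)^n\subset\T^n$ and $I(Y)$ is automatically radical by construction.

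First I would apply \rp{CodeHF}, which identifies $\dim_{\F_q}(\cC_{\aa,Y})$ with $H_Y(\aa)$ for any $\aa\in\cA$, so each of the three conclusions in the theorem becomes a statement about $H_Y(\aa)$. Next, for the main combinatorial formula, I would invoke \rp{HilbFla} directly: since $Y$ is assumed to be a complete intersection cut out by $n$ homogeneous polynomials of degrees $\aa_1,\dots,\aa_n$, that proposition yields the claimed inclusion--exclusion expression for $H_Y(\aa)$ in terms of $|P_{\aa-\aa_I}\cap M|$. No semi-ampleness is used here, only the complete intersection structure.

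For the two ``in particular'' clauses, I would appeal to Corollary \ref{HF}. Part (\textbf{i}) of that corollary gives $H_Y(\aa)=|P_\aa\cap M|$ whenever $\aa\not\succeq\aa_i$ for all $i$, which is precisely the first special case. Part (\textbf{iii}), which requires the generators to be semi-ample (exactly our hypothesis), gives $H_Y(\aa)=|Y|$ for $\aa\succeq\aa_1+\cdots+\aa_n$; since the block length of $\cC_{\aa,Y}$ is $N=|Y|$, this means the evaluation map is surjective and the code fills all of $\F_q^N$, i.e.\ is trivial.

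I do not expect any genuine obstacle: the only thing worth verifying carefully is that $Y\subseteq(\F_q^*)^n$ implies the hypothesis ``$Y\subseteq\T^n$'' required by Corollary \ref{HF}, which follows from $\F_q^*\subseteq\K^*=\T$. Everything else is direct substitution of the cited results.
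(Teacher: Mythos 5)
Your proposal is correct and matches the paper exactly: the paper states the theorem as a direct consequence of Proposition \ref{P:CodeHF} (identifying $\dim_{\F_q}(\cC_{\aa,Y})$ with $H_Y(\aa)$), Proposition \ref{P:HilbFla} (the inclusion--exclusion formula), and Corollary \ref{HF} (parts (\textbf{i}) and (\textbf{iii}) for the two special cases), which is precisely your assembly. Your remark that $Y\subseteq(\F_q^*)^n\subset\T^n$ guarantees the hypotheses of Corollary \ref{HF} is the only verification needed, and it is the right one.
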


\begin{rema} \cite{sop} gives a bound on the minimum distance of evaluation codes $\cC_{\aa,Y}$ for local complete intersections $Y$ and for degrees less than or equal to the {\it critical degree}, i.e. for
$$\aa \preceq \rho:= \aa_1+\dots+\aa_n-\sum_{j=1}^r\bb_j.$$ So, for degrees between $\rho$ and
$ \aa_1+\dots+\aa_n$, the code $\cC_{\aa,Y}$ may be non-trivial (see examples in the third section) and currently no non-trivial bound for the minimum distance is known.
\end{rema}

We finish with an example illustrating the formula in \rt{CodeFla}.

\begin{ex} Let $\F_5$ be a field of 5 elements. Consider a
 3-dimensional toric variety $X$ defined by the normal fan to 
the polytope with vertices $$\{(0,0,0),(0,1,0)(1,-1,1),(1,2,1),(-1,2,1),(-1,-1,1)\}.$$
The corresponding exact sequence $\mathfrak{P}$ is:
$$\dis \xymatrix{ \mathfrak{P}: 0  \ar[r] & \Z^3 \ar[r]^{\phi} & \Z^5 \ar[r]^{\deg}& \Z^2 \ar[r]& 0},$$  
where 
$$\phi=\begin{bmatrix}
 ~1 & -1 & 0 & ~~~0 & ~~~0~ \\
 ~0 & ~~~0 & 1 & -1 & ~~~0~\\
~1 & ~~~1 & 1 & ~~~1 & -1~
\end{bmatrix}^T   \quad  \mbox{ and} \quad 
\deg=\begin{bmatrix}
-1 & -1 & 1 & 1& 0~\\
~~~1& ~~~1 & 0 & 0 & 2~
\end{bmatrix}.
$$
We let $S=\K[x,y,z,w,t]$ be the homogeneous coordinate ring of $X$. 
Now consider the homogeneous ideal $J=\la F_1, F_2, F_3 \ra$, where $F_1=x^4-y^4$,
$F_2=z^4-w^4$, and $F_3=(xyzw)^4-t^4$. The corresponding degrees are 
$\aa_1=(-4,4)$, $\aa_2=(4,0)$, and $\aa_3=(0,8)$. The ideal $J$ is radical and $B$-saturated
and defines a reduced subscheme $Y$ with $|Y|=64$. The polytopes
$P_{\aa_1}$ and $P_{\aa_2}$ are lattice segments of length $4$ with vertices $\{(-2,0,0),(2,0,0)\}$
and $\{(0,-2,0),(0,2,0)\}$, respectively. The polytope  $P_{\aa_3}$ is a square pyramid with
vertices $$\{(0,0,0),(4,-4,4),(4,4,4),(-4,4,4),(-4,-4,4)\}.$$ One can check that the mixed
volume $3!V(P_{\aa_1},P_{\aa_2},P_{\aa_3})=64$.

Now consider $\aa=(-2,7)$. Its polytope $P_\aa$ has vertices 
$$\Big\{(0,0,0),(-7,0,0),(-7,-5,0),(0,-5,0),\Big(-\frac{5}{2},-\frac{5}{2},-\frac{5}{2}\Big),\Big(-\frac{9}{2},-\frac{5}{2},-\frac{5}{2}\Big)\Big\},$$ 
and contains $80$ lattice points.
Furthermore, $P_{\aa-\aa_I}$ is empty for all $I\subset \{1,2,3\}$, but $I=\{1\}$, $I=\{2\}$, and 
$I=\{1,2\}$. 

With the help of {\it polymake} \cite{polymake}, we obtain  $|P_{\aa-\aa_{1}}|=32$, $|P_{\aa-\aa_{2}}|=16$, and  
$|P_{\aa-\aa_{1}-\aa_2}|=8$. Therefore, the formula in \rt{CodeFla} gives
$$\dim_{\F_q}(\cC_{\aa,Y})=80-32-16+8=40.$$
This is confirmed by computing the Hilbert function of $Y$ using {\it Macaulay 2}, see \cite{Mac2}.

\end{ex}

\bibliographystyle{elsart-harv}

\end{document}